\documentclass[11pt]{amsart}
\usepackage{amssymb,amsmath,amsthm}
\usepackage[margin=1in]{geometry}
\usepackage[hidelinks]{hyperref}

\theoremstyle{plain}
\newtheorem{theorem}{Theorem}[section]
\newtheorem{proposition}[theorem]{Proposition}
\newtheorem{lemma}[theorem]{Lemma}

\theoremstyle{definition}

\theoremstyle{remark}
\newtheorem{remark}[theorem]{Remark}

\newcommand{\eps}{\varepsilon}
\newcommand{\N}{\mathbb{N}}
\newcommand{\Z}{\mathbb{Z}}
\newcommand{\Q}{\mathbb{Q}}
\newcommand{\R}{\mathbb{R}}
\newcommand{\pT}{p_T}
\newcommand{\lean}[1]{\texttt{#1}}

\makeatletter
\def\@settitle{\begin{center}%
  \baselineskip14\p@\relax
  \bfseries\Large
  \@title
  \end{center}%
}

\def\@setauthors{%
  \begingroup
  \def\thanks{\protect\thanks@warning}%
  \trivlist
  \centering
  \normalsize
  \@topsep30\p@\relax
  \advance\@topsep by -\baselineskip
  \item\relax
  \andify\authors
  \authors % <-- Removed \uppercasenonmath and \scshape
  \endtrivlist
  \endgroup
}
\makeatother

\title[The exponent $1/4$ in three complexity bounds]{The exponent $1/4$ in three complexity
bounds: integer bases, continued fractions, and the $(3/2)^n$ steering word}
\author{Ralf Stephan}
\thanks{Institute for Globally Distributed Open Research and Education (IGDORE) \\
MSC 11J68, 11A63, 11J87, 68R15}
\date{22 October 2026}

\begin{document}

\begin{abstract}
Let $\xi$ be a real algebraic irrational number and $p(n,\xi,b)$ the number of distinct blocks of
length $n$ in its base-$b$ expansion. Bugeaud and Evertse proved in 2008 that
$p(n,\xi,b)/(n(\log n)^\eta)$ is unbounded for every $\eta<1/11$. We prove this for every
$\eta<1/4$. We also prove the analogous bound for the continued fraction expansion of an algebraic
number of degree at least three, with every exponent $\delta<1/4$. Bugeaud announced this
statement in 2013 with an unspecified $\delta>0$. Finally, the subword complexity of the steering
word of $(3/2)^n$ exceeds $C\,k(\log k)^\eta$ infinitely often for every $\eta<1/4$ and every $C$.
The three proofs share one count: $\eps^{-1}$ classes times $\eps^{-3}$ subspaces from the
quantitative Subspace Theorem of Evertse and Ferretti. The subspaces are then disposed of at the
cost of a logarithm, with no further power of $\eps^{-1}$. All three results, and the quantitative
Subspace Theorem they rest on, are machine-verified in Lean~4.
\end{abstract}

\maketitle

\section{Introduction}

\subsection*{Borel's problem}
Borel introduced normal numbers in 1909 and proved that almost every real number is normal to
every integer base \cite{Bor09}. In 1950, at the end of a note on the decimal digits
of~$\sqrt2$, he wrote \cite{Bor50}:
\begin{quote}
\emph{En d\'efinitive, le probl\`eme de savoir si les chiffres d'un nombre tel que $\sqrt2$
satisfont ou non \`a toutes les lois que l'on peut \'enoncer pour des chiffres choisis au hasard
me para\^it toujours \^etre un des probl\`emes les plus importants qui se posent aux
math\'ematiciens.}
\end{quote}
It is now commonly expected that every real algebraic irrational number is normal to every integer
base. This is far out of reach. A much weaker measure of randomness is the \emph{block complexity}
\cite{BE08}: for an integer $b\ge2$ and a real number $\xi$ with $b$-ary expansion
$\xi=\lfloor\xi\rfloor+\sum_{j\ge1}a_jb^{-j}$,
\[
  p(n,\xi,b)=\operatorname{Card}\{a_{k+1}a_{k+2}\cdots a_{k+n}:k\ge0\}.
\]
A number normal to base $b$ has $p(n,\xi,b)=b^n$ for every $n$. The known lower bounds for
algebraic $\xi$ are barely above linear.
\begin{itemize}
\item By Morse and Hedlund \cite{MH38}, $p(n,\xi,b)\ge n+1$ for every irrational $\xi$.
\item Ferenczi and Mauduit \cite{FM97} used Ridout's theorem \cite{Rid57} to show
  $p(n,\xi,b)-n\to\infty$ for algebraic irrational $\xi$.
\item Adamczewski, Bugeaud and Luca \cite{ABL04} gave a combinatorial transcendence criterion
  based on the Schmidt Subspace Theorem. With it, Adamczewski and Bugeaud \cite{AB07} proved
  $p(n,\xi,b)/n\to\infty$. Since automatic sequences have complexity $O(n)$ \cite{Cob72}, this
  answers Cobham's question \cite{Cob68}: irrational automatic numbers are transcendental.
\item Bugeaud and Evertse \cite[Thm.~2.1]{BE08} brought in the quantitative Subspace Theorem of
  Evertse and Schlickewei \cite{ES02}. They proved that
  \begin{equation}\label{eq:BE}
    \limsup_{n\to\infty}\frac{p(n,\xi,b)}{n(\log n)^\eta}=+\infty\qquad\text{for every }\eta<1/11,
  \end{equation}
  so in particular $p(n,\xi,b)\ge n(\log n)^{0.09}$ for infinitely many $n$.
\end{itemize}

\subsection*{Continued fractions}
By Lagrange's theorem, the continued fraction expansion $\alpha=[0;a_1,a_2,\dots]$ is ultimately
periodic exactly when $\alpha$ is quadratic. Let
\[
  p(n,a)=\operatorname{Card}\{a_{\ell+1}\cdots a_{\ell+n}:\ell\ge0\}
\]
be the complexity function of the sequence of partial quotients of an algebraic $\alpha$ of
degree at least three. Its lower bounds came in the same order as for integer bases.
\begin{itemize}
\item Adamczewski and Bugeaud \cite{AB05} proved $p(n,a)-n\to\infty$.
\item Bugeaud \cite[Thm.~1.1]{Bug13} proved $p(n,a)/n\to\infty$. It follows that the continued
  fraction of an algebraic number of degree at least three is not automatic
  \cite[Thm.~1.2]{Bug13}.
\end{itemize}

\subsection*{The steering word}
Little is known about the distribution of the fractional parts of $(3/2)^n$ \cite{Mah68}. Write
$(3/2)^n=m_n+\eps_n$, where $m_n$ is the nearest integer, and put $t_n=2m_{n+1}-3m_n$. The
resulting word $T=(t_n)_{n\ge0}$ over $\{-2,\dots,2\}$ is the \emph{steering word} of $(3/2)^n$
\cite{St26}. Its subword complexity $\pT(k)$ satisfies $\pT(k)/k\to\infty$ \cite[Thm.~1.1]{St26}.
The proof transfers the Adamczewski--Bugeaud method to this word. Problem~5.2 of \cite{St26} asks
for a bound $\pT(k)\ge k(\log k)^\delta$ for infinitely many $k$, with some $\delta>0$.

\subsection*{What was announced}
Bugeaud remarked twice that the exponents could be improved or carried over, and neither remark
was followed up.
\begin{itemize}
\item \emph{Integer bases.} After restating \eqref{eq:BE} as Theorem~12.2 of his survey
  \cite{Bug15}, Bugeaud writes: \emph{``Using the recent results of [Evertse--Ferretti] allows us
  to show that Theorem~12.2 holds for $\eta$ in a slightly larger interval than $[0,1/11)$.''} No
  value and no proof are given.
\item \emph{Continued fractions.} At the end of \cite{Bug13}, \S6, Bugeaud writes that
  \emph{``it seems to be possible to prove''} that
  \begin{equation}\label{eq:61}
    \limsup_{n\to\infty}\frac{p(n,a)}{n(\log n)^\delta}=+\infty
  \end{equation}
  for some $\delta>0$, and that \emph{``more details will be given in a subsequent note''}. Two
  years later \cite[\S12]{Bug15} he says that the continued fraction analogues of Theorem~12.2
  and of an effective bound \emph{``have not been written yet, but there is little doubt that they
  hold''}.
\end{itemize}
Bugeaud's 2011 survey \cite{Bug11} and his monograph \cite[Ch.~8]{Bug12} restate \eqref{eq:BE}
as it is. Bugeaud and Evertse also note \cite[(2.4)]{BE08} that their method gives an extra factor
$(\log\log n)^{\delta}$ at $\eta=1/11$, without proof. We are not aware of any later work that
gives an exponent beyond $1/11$ in \eqref{eq:BE}, or that proves \eqref{eq:61} with an explicit
value of $\delta$. We checked the papers that cite \cite{BE08}.

\subsection*{Results}
We prove the following three theorems.

\begin{theorem}\label{thm:A}
Let $b\ge2$ be an integer and $\xi$ a real algebraic irrational number. For every real $\eta<1/4$
and every real $C$,
\[
  p(n,\xi,b)>C\,n(\log n)^\eta\qquad\text{for infinitely many }n.
\]
\end{theorem}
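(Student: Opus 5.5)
We argue by contradiction, following the scheme of Bugeaud and Evertse \cite{BE08} but replacing the Evertse--Schlickewei count by the Evertse--Ferretti one. Suppose there are $\eta<1/4$ and $C$ with $p(n,\xi,b)\le C\,n(\log n)^\eta$ for all large $n$. The first step is the combinatorial one. Fix $n$, put $\eps$ small of size $(\log n)^{-\eta'}$ for a suitable $\eta'$ slightly above $\eta$, and look at the first $\asymp n/\eps$ digits. A pigeonhole argument on the at most $p(n)$ distinct length-$n$ blocks produces many ``repetitions'': indices $k$ and a word $U_kV_k^{w}$ with $|U_k|,|V_k|\ll p(n)$ and repetition exponent $w\ge 1+c\eps$, matching the $b$-ary expansion of $\xi$. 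Each such repetition gives a rational $\frac{P_k}{b^{r_k}(b^{s_k}-1)}$ with $r_k=|U_k|$, $s_k=|V_k|$, approximating $\xi$ to within $b^{-(r_k+s_k)(1+c\eps)}$. Running over dyadic scales $n=2^j$ up to $N$, we get roughly $\log N$ essentially distinct such approximations with heights $H_k=b^{r_k+s_k}$ in a range where the growth ratio $\log H_{k+1}/\log H_k$ is controlled by $p(n)/n\ll(\log n)^\eta$.

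Next we feed each approximation into the linear forms $L_1=X_1,\;L_2=X_2,\;L_3=\xi X_1-\xi X_2 - X_3$ evaluated at $\mathbf x=(b^{r+s},b^r,P)$, exactly as in \cite{ABL04,BE08}. The product of the three forms, with the $b$-adic places included, is $\le H(\mathbf x)^{-c'\eps}$. The quantitative Subspace Theorem of Evertse--Ferretti then covers the large solutions by at most $\ll \eps^{-3}\log(\text{data})$ proper subspaces of $\Q^3$, after splitting the solutions into $\ll\eps^{-1}$ classes according to the ratio $s/r$ (or the place distribution). This is the ``$\eps^{-1}$ classes times $\eps^{-3}$ subspaces'' count: the total number of exceptional subspaces is $\ll\eps^{-4}$ up to a logarithm.

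The third step, and the main obstacle, is disposing of the subspaces without losing a further power of $\eps^{-1}$. A solution in a fixed plane $z_1x_1+z_2x_2+z_3x_3=0$ forces a relation $z_1b^{s}+z_2+z_3P/b^r=0$. By comparing two distinct solutions in the same plane one deduces that $\xi$ is well-approximated by a fixed rational unless the heights grow geometrically. Since $\xi$ is irrational algebraic, Roth/Ridout then shows that each plane contains only $O(\log\log N)$, or at worst $O(\log(1/\eps))$, of our approximations, rather than $\eps^{-1}$ of them. I would first try to prove that the $r+s$ values inside one plane form a lacunary sequence with ratio $\ge 1+c$ independent of $\eps$. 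That reduces the per-plane count to a logarithm.

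Combining the steps, we have $\gg\log N/(\log N)^{\eta}$ independent approximations, obtained by counting scales against the growth ratio. The subspaces absorb at most $\eps^{-4}\cdot\log$ of them, which is $(\log N)^{4\eta'}$ times a logarithmic factor. Choosing $\eta'$ so that $4\eta'<1$ with $\eta<\eta'$ gives a contradiction for $N$ large. Tracking these exponents precisely, including how $\eps$ and the scale count enter, is where I expect most of the bookkeeping to go; the threshold $1/4$ should emerge exactly from $\eps^{-4}$.
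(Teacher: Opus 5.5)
Your first two steps agree with the paper and with \cite{BE08}. You take the points $(b^{t},b^{r},p)$ with doubling $t$, sort them into about $\eps^{-1}$ classes by $r/t$, and let Theorem~\ref{thm:EF} give $\eps^{-3+o(1)}$ planes per class. The gap is your third step, which is where the whole improvement lies.

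\textbf{Why the third step fails.} Roth's or Ridout's theorem cannot bound the number of points on a plane by $O(\log\eps^{-1})$. The qualitative theorems give no bound that is uniform as $\eps\to0$ and the plane varies. The quantitative Ridout theorem gives a power of $\eps^{-1}$, and that power is exactly the third factor in $1+7+3=11$ of \cite{BE08} (or $1+3+3=7$ with \cite{EF13}). Lacunarity does not help either. The inequality $2t_n<t_{n+1}$ already holds by construction, and it gives a logarithmic count only together with a window $t_n\le R\,t_{n_0}$, $R\le\eps^{-O(1)}$, for the points on the plane. Your heuristic that two points on a plane force $\xi$ close to a fixed rational is also not what happens. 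Write the plane as $z_1b^t+z_2b^r+z_3p=0$.
\begin{itemize}
\item If $z_3\ne0$, substitute for $p$ and apply Liouville's inequality twice: to $z_1+z_3\xi$, and to $(z_1b^s+z_2)+z_3(b^s-1)\xi$. This bounds $t$ by $O(\log Z)$, a window of constant ratio, hence $O(1)$ points.
\item If $z_3=0$, then $b^{s}=-z_2/z_1$, so every point on the plane has the same gap $s_0$. The plane then carries a genuine Ridout problem for $(b^{s_0}-1)\xi$ with denominators $b^r$, and nothing bounds its points by a logarithm.
\end{itemize}

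\textbf{The missing idea is to split the points by gap.}
\begin{itemize}
\item A point with a large gap, $\eps t<4s$, on a constant-gap plane satisfies $t<4s_0/\eps\le4t_{n_1}/\eps$, where $n_1$ is the first index on the plane. This is a window of ratio $4/\eps$, giving $O(\log\eps^{-1})$ points.
\item Points with a small gap, $4s\le\eps t$, are not counted plane by plane at all. Map them to $y=(b^t-b^r,p)\in\Z^2$. Since $r\ge(1-\eps/4)t$, the coordinate $y_1=b^r(b^s-1)$ is $l$-adically small for every prime $l\mid b$. One application of Theorem~\ref{thm:EF} with $n=2$ then puts all of them on $O(\eps^{-3}(1+\log\eps^{-1})^2)$ lines.
\item Finally, $\gcd(b,b^s-1)=1$ together with the normalization $b\nmid p$ for $r\ge1$ (Lemma~\ref{lem:dvd}) shows that each such line carries at most one of these points.
\end{itemize}
Without this split, your total is $\eps^{-4}$ planes times an uncontrolled (at best $\eps^{-3}$) number of points per plane, and $1/4$ does not follow.

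\textbf{A smaller slip in your last paragraph.} Taking $(\log N)^{1-\eta}$ approximations against $(\log N)^{4\eta'}$ subspaces would only give $\eta<1/5$. Climbing from $\log H\asymp1$ to $\log H\asymp N$ with ratio at most $(\log N)^{\eta}$ per step takes at least about $\log N/(\eta\log\log N)$ steps, not $\log N/(\log N)^{\eta}$. Equivalently, as in Lemma~\ref{lem:61}, one has $N$ points with $\log t_{2N}=O(N\log N)$, so $\eps^{-1}=N^{v+o(1)}$.
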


\begin{theorem}\label{thm:B}
Let $a=(a_\ell)_{\ell\ge1}$ be a sequence of positive integers such that
$\alpha=[0;a_1,a_2,\dots]$ is algebraic of degree at least three. For every $\delta$ with
$0\le\delta<1/4$ and every real $C$,
\[
  p(n,a)>C\,n(\log n)^\delta\qquad\text{for infinitely many }n.
\]
\end{theorem}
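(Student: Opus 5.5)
We prove Theorem~\ref{thm:B} by contradiction, following the Bugeaud--Evertse scheme \cite{BE08} as transposed to continued fractions in \cite{Bug13}. Suppose that for some $\delta<1/4$ and some $C$ we have $p(n,a)\le C\,n(\log n)^\delta$ for all large $n$.

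\textbf{Step 1: repetitions.} We first extract repetitions from the complexity bound. By the pigeonhole principle, for each large $n$ the prefix of length roughly $(1+c)\,p(n,a)$ contains two occurrences of the same block of length $n$. This gives infinitely many pairs of finite words $(U_j,V_j)$ with $|V_j|\ge n$, such that $a$ begins with $U_jV_jV_j'$, where $V_j'$ is a long prefix of $V_j$. We also get the bound
\[
|U_jV_j|\ll n(\log n)^\delta .
\]
Let $\alpha_j$ be the quadratic number with expansion $[0;U_j,\overline{V_j}]$. Then $\alpha_j$ agrees with $\alpha$ on $|U_jV_jV_j'|$ partial quotients.

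\textbf{Step 2: reduction to the Subspace Theorem.} Using the convergents $p_{r}/q_{r}$ at the positions $|U_j|$ and $|U_jV_j|$, the number $\alpha_j$ is a root of an explicit integer quadratic polynomial whose coefficients are bilinear in these convergents. The closeness of $\alpha$ to $\alpha_j$ yields a system of four linear forms in $\mathbf{x}_j=(q_{r}q_{s}',\ q_rp_s'+\dots,\ \dots)\in\Z^4$, with algebraic coefficients depending on $\alpha$. Their product is at most $H(\mathbf{x}_j)^{-\eps}$. Here $\eps$ is roughly the ratio $|V_j'|/|U_jV_j|$, which is $\gg(\log n)^{-\delta}$.

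The key difference from integer bases is that partial quotients are unbounded. The heights $q_r$ therefore grow at a rate that is not controlled by the word lengths. I handle this as in \cite{Bug13}. Lévy-type bounds give $q_{r}\ge \phi^{r}$ from below. From above, algebraicity together with Roth's theorem (applied quantitatively) gives $\log q_{r+1}\le(1+o(1))\log q_r$-type control along the relevant indices. So $\log H(\mathbf{x}_j)$ is comparable to the word lengths up to constants, and consecutive solutions grow at least geometrically.

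\textbf{Step 3: counting solutions.} I sort the solutions $\mathbf{x}_j$ with $\eps$ in a dyadic range into $\eps^{-1}$ classes according to the ratio $|U_j|/|V_j|$. Within each class, Evertse--Ferretti bounds the number of proper subspaces containing the large solutions by $\ll\eps^{-3}\log(\cdots)$, as the abstract announces. In each subspace, a nontrivial linear relation among the convergent products, combined with the geometric growth of heights, forces the solutions to be few. Concretely, I show at most $O(\log\eps^{-1})$ solutions with heights in a window $[H,H^{2}]$ lie in a fixed subspace. On the other hand, the repetition construction produces $\gg\log n$ distinct solutions at dyadic scales up to $n$. Comparing $\eps^{-4}\log\eps^{-1}\cdot\log\log$-type losses with $\eps\approx(\log n)^{-\delta}$ gives a contradiction exactly when $4\delta<1$.

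\textbf{Main obstacle.} The hardest part is the subspace step in the continued fraction setting. One must show that a single proper subspace cannot absorb many solutions, and must do so without losing another power of $\eps^{-1}$. In the base-$b$ case, a subspace relation becomes an exponential equation in $b^{|U|},b^{|V|}$. Here it becomes a relation among convergents, which must instead be excluded using degree $\ge3$ (the relation would force $\alpha$ quadratic or rational) plus growth of denominators. I would first try to show that a fixed subspace forces $|V_j|$ (or $|U_j|$) to take boundedly many values per dyadic scale, so that the subspace costs only a logarithm.
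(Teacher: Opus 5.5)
There is a genuine gap, and it is exactly the step you flag as the main obstacle. You assert the bound of $O(\log\eps^{-1})$ points per subspace but give no mechanism for it. Moreover, in the uniform form you state it, for every proper subspace produced by the first application of Theorem~\ref{thm:EF}, it is not what can be proved.

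The usable content of a relation $z\cdot v=0$ is the number $\Lambda=z_0+(z_1+z_2)\alpha+z_3\alpha^2$. Degree $\ge3$ gives $\Lambda\ne0$ unless $z$ is a multiple of $(0,1,-1,0)$, that is, unless the subspace is the hyperplane $T_0=\{X_1=X_2\}$. On $T_0$ the relation does not force $\alpha$ to be quadratic or rational, contrary to your expectation. The points there form a genuine $\eps$-dependent approximation problem.

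\begin{itemize}
\item \emph{The hyperplane $T_0$.} The paper does not bound $T_0$ as one subspace among many. It treats $T_0$ once, globally, by a separate three-variable application of Theorem~\ref{thm:EF} with $\delta=\eps$ (forms $L_1,L_2,X_0$ in $(X_0,X_1,X_3)$). This costs $\eps^{-3}$ planes.
\item \emph{Off $T_0$.} The missing idea is Bugeaud's Claim $Q|\Lambda|\le12(M+1)^2\sum_i|z_i|$. Combined with the quadratic Liouville inequality it gives $Q\le|z|^{O(1)}$, so all later points satisfy $Q^2\le H^{1/4}$. After eliminating a coordinate, an application at the \emph{fixed} exponent $\delta=3/4$ puts them into $O(1)$ planes.
\item \emph{Planes.} These are handled by Liouville for $L_1(x)L_2(y)-L_1(y)L_2(x)$, which confines further points to $\log H\le O(\eps^{-1})\log H_b$. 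Together with $\log H_{k+1}\ge2\log H_k$, this is where the factor $\log\eps^{-1}$ comes from.
\end{itemize}

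By contrast, your bound on a window $[H,H^2]$ is vacuous. Under doubly exponential growth such a window holds at most two points anyway, and it says nothing about a subspace over all heights. Your fallback idea of bounding the values of $|V_j|$ per dyadic scale is not carried out.

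Step~2 also contains an error. Under your contradiction hypothesis $p(1,a)<\infty$, so $a_\ell\le M$. The partial quotients are therefore bounded, not unbounded, and $\log q_\ell\le\ell\log(M+1)$ is immediate. That is what makes $\log H$ comparable to the word length, and hence $\eps\gg(\log n)^{-\delta}$. Your Roth route could not deliver this: $\log q_{r+1}\le(1+o(1))\log q_r$ does not imply $\log q_r\ll r$.

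Finally, dyadic lengths do not give $\log H_{k+1}\ge2\log H_k$. The configurations only pin $\log H_k$ down to within a factor $(\log\ell_k)^\delta$ of $\ell_k$. This is why the paper takes $\ell_k=E^{k+1}(k+1)^{2(k+1)}$.
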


\begin{theorem}\label{thm:C}
For every real $\eta<1/4$ and every real $C$, the subword complexity of the steering word of
$(3/2)^n$ satisfies $\pT(k)>C\,k(\log k)^\eta$ for infinitely many $k$. In particular,
$\pT(k)\ge k(\log k)^{1/8}$ for infinitely many $k$, which solves Problem~5.2 of \cite{St26}.
\end{theorem}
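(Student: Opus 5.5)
The plan is to run the same argument that proves Theorem~\ref{thm:A}, replacing the base-$b$ expansion of $\xi$ by the ``base $3/2$'' expansion that the steering word provides. From $(3/2)^{n+1}=\tfrac32(3/2)^n$ we get $t_n=3\eps_n-2\eps_{n+1}$. Iterating this gives
\[
  \eps_n=\sum_{j\ge0}t_{n+j}\,\frac{2^j}{3^{j+1}},
\]
so each suffix of $T$ encodes $\eps_n=(3/2)^n-m_n$. Note that $T$ is not ultimately periodic: otherwise $\eps_n$ would be a rational function of $(2/3)^{\text{period}}$ along residue classes, which contradicts $\pT(k)/k\to\infty$ \cite[Thm.~1.1]{St26}. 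Suppose, for a contradiction, that $\pT(k)\le C\,k(\log k)^\eta$ for all large $k$, with some fixed $\eta<1/4$.

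\textbf{Step 1 (combinatorics).} I would use the Bugeaud--Evertse pigeonhole argument, which I assume is the combinatorial lemma used for Theorem~\ref{thm:A}. Take a length $k$ with $\pT(k)\le Ck(\log k)^\eta$ and look at the $\pT(k)+1$ windows of length $k$ starting in $[0,\pT(k)]$. Two of them must coincide. Repeating this over a geometric range of lengths $k$ produces, inside a prefix of length $N$, many positions $n_i$ at which $T$ begins with $U_iV_iV_i'$. Here $V_i'$ is a prefix of $V_iV_i\cdots$, and $|U_i|\ll(\log|V_i|)^\eta|V_i|$ while $|V_iV_i'|\ge(1+c)|U_iV_i|$. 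The count I aim for is roughly $(\log N)^{1-\eta}$-many such ``good'' repetitions, up to constants, with lengths that grow geometrically.

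\textbf{Step 2 (Diophantine translation).} Put $u=|U|$ and $v=|V|$. Summing the periodic part in closed form, a repetition at position $n$ gives
\[
  \Bigl|\eps_n-\frac{P}{3^{u}(3^{v}-2^{v})}\Bigr|\ll\Bigl(\frac23\Bigr)^{u+|VV'|},
\]
where $P$ is an integer combination of powers of $2$ and $3$. Substitute $\eps_n=(3^n-2^nm_n)/2^n$ and clear denominators. This produces a small value of a linear form in the integer vector
\[
  \mathbf x=\bigl(3^{n+u+v},\,3^{n+u},\,2^{n}m_n(3^v-2^v)\cdot(\ldots),\,P'\bigr)
\]
in four variables. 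The other linear forms are coordinate forms, which are trivially small at the $2$- and $3$-adic places. This is the same shape of system as the one used for Theorem~\ref{thm:A}, with $S=\{\infty,2,3\}$, and the heights are $\approx 3^{n+u+v}$. The product of the forms over $S$ is then $\le H(\mathbf x)^{-\eps}$ with $\eps\asymp\bigl((\log|V|)^{\eta}\bigr)^{-1}$, because the repetition exponent exceeds $1$ only by a margin of order $|V|/|U|$.

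\textbf{Step 3 (counting).} Apply the quantitative Subspace Theorem of Evertse--Ferretti through the shared count: the solutions of height beyond the explicit bound lie in at most $\ll\eps^{-1}$ height classes times $\ll\eps^{-3}$ proper subspaces. With $\eps^{-1}\asymp(\log N)^\eta$ this gives $(\log N)^{4\eta}$, against at least $(\log N)^{1-\eta\cdot\text{(small)}}$ solutions from Step~1. The threshold should be $4\eta<1$, which is exactly where $1/4$ comes from, provided each subspace absorbs only $O(\log)$ solutions.

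\textbf{Step 4 (disposing of subspaces; the main obstacle).} One must show that a fixed proper rational subspace contains $O(\log N)$ solutions at most, and not a further power of $\eps^{-1}$. A subspace relation $\sum c_jx_j=0$ with fixed $c_j$ would force an exact identity between $(3/2)^n$, $m_n$ and the periodic value $P/(3^u(3^v-2^v))$. I would first use the gap principle: the heights of solutions in one subspace grow doubly-exponentially or at least geometrically. In addition, a relation among $3^{a},2^{b}$ and $m_n$ with bounded coefficients has, by a unit-equation/$S$-unit argument ($m_n$ is not an $S$-unit for most $n$), only boundedly many solutions except for degenerate families. Those degenerate families would make $T$ eventually periodic, which is excluded. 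Making the $m_n$ coordinate fit the Evertse--Ferretti hypotheses, namely that the forms are defined over $\overline{\Q}$ with controlled heights even though $m_n$ is not an $S$-unit, is the step I expect to require the most care.
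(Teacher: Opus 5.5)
Your architecture (combinatorial lemma, integer points, $\eps^{-1}$ classes times $\eps^{-3}$ subspaces, then few points per subspace) matches the paper's. The gap is Step~4. It is the entire content of the improvement to $1/4$, and the mechanism you sketch fails.

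\textbf{Why the sketch fails.} The subspaces from Theorem~\ref{thm:EF} have no a priori bound on their coefficients, and $m_n$ is not an $S$-unit, so there is no unit equation to invoke. More seriously, the degenerate family is not excluded by aperiodicity. With the correct points $x=(3^c,2^s3^a,2^cM)$, a plane $w_0X_0+w_1X_1=0$ forces $w_03^s+w_12^s=0$, i.e.\ a constant gap $s_0$. Infinitely many repetitions of a fixed period $s_0$ at growing positions are perfectly compatible with $T$ not being ultimately periodic. On such a plane you face a genuine Ridout problem for $(3/2)^a(3^{s_0}-2^{s_0})2^{-s_0}$. Diophantine approximation gives no upper bound on $c$ in terms of the first point on the plane, so the gap principle yields no window. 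A quantitative Ridout count costs $\eps^{-3}$ points per plane, which brings you back to $1+3+3=7$, i.e.\ $\eta<1/7$. Also, in your normalization there are only about $\log N$ points in all, so ``$O(\log N)$ per subspace'' is vacuous; you need $O(\log\eps^{-1})$.

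\textbf{The missing idea} is to split the points by the size of the gap $s=c-a$.

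For a \emph{large} gap, $\eps c<4s$, a constant-gap plane gives $\tau_n<4s_0/\eps\le(4/\eps)\tau_{n_1}$, with $n_1$ the first index on the plane. This is a window of ratio $4/\eps$, hence $O(\log\eps^{-1})$ points by Lemma~\ref{lem:window}. The upper bound comes from the combinatorics, not from approximation. On a plane with $w_2\ne0$, eliminate $X_2$ and write the odd integer $\mathcal N=2^c(\eps_c-\eps_a)$, $|\mathcal N|<2^c$, as $\beta_13^c+\beta_22^s3^a$. Comparing valuations then gives windows of bounded ratio (Lemmas~\ref{lem:sameGap}--\ref{lem:distinctGaps}).

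Points with a \emph{small} gap, $4s\le\eps c$, are not counted per plane at all. A single two-variable application of Theorem~\ref{thm:EF} to $y=(3^c-2^s3^a,2^cM)$ puts them on $\eps^{-3+o(1)}$ lines. Here the small gap supplies $|y_0|_3\le3^{-a}\le(3^c)^{-(1-\eps/4)}$. Since $y_0=3^a(3^s-2^s)$ is odd, two points on one line force $2^{c'}\mid y_1$. Hence $c'<8c/5$, and each line carries at most three indices.

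\textbf{Step~2 also needs repair.} You keep $2^nm_n(3^v-2^v)(\cdots)$ and $P'$ as separate coordinates, and only $2^n$ is forced to divide each. Then the exponent sum in Theorem~\ref{thm:EF} comes out positive. The telescoping identity $\sum_{j<v}t_{a+j}2^j3^{v-1-j}=2^vm_{a+v}-3^vm_a$ collapses your vector to the three-variable point above, with $M=m_c-m_a$ and $x_0-x_1-x_2=2^c(\eps_c-\eps_a)$. The prefix $U$ plays no role. You can use the bound $|\eps_c-\eps_a|\le(2/3)^L$ for a repetition of length $L$ at $(a,c)$ directly.
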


Each statement is equivalent to $\limsup p(n)/(n(\log n)^\eta)=+\infty$. Theorem~\ref{thm:A}
improves \eqref{eq:BE} from $1/11$ to $1/4$. Theorem~\ref{thm:B} is \eqref{eq:61} with every
$\delta<1/4$. In Theorem~\ref{thm:B}, $p(n,a)$ may be infinite; this happens exactly when $a$ is
unbounded, and then there is nothing to prove.

\subsection*{The common count}
All three proofs follow \cite[\S\S6--7]{BE08}. Suppose $p(n)\le c\,n(\log n)^u$ for all large
$n$, and fix $v$ with $u<v<1/4$. A pigeonhole argument gives repetitions in the word, and these
give integer points $x_N,\dots,x_{2N-1}$. The heights of the points grow at least doubly
exponentially, and each point satisfies a system of Diophantine inequalities with exponent
$\eps=N^{-v+o(1)}$. Sort the points into about $\eps^{-1}$ classes according to the relative size
of two of their coordinates. Theorem~\ref{thm:EF} below then puts each class into
$\eps^{-3+o(1)}$ proper subspaces, so there are $\eps^{-4+o(1)}$ subspaces in all. The point of
this paper is that each of these subspaces contains only $O(\log\eps^{-1})$ of the points, up to
a remainder that lies on $\eps^{-3+o(1)}$ lines or planes holding $O(\log\eps^{-1})$ points each.
Hence
\[
  N\le\eps^{-4+o(1)}=N^{4v+o(1)},
\]
which is impossible for large $N$ since $4v<1$.

In \cite{BE08} the count is $\eps^{-11}$, from three factors:
\begin{itemize}
\item the $\eps^{-1}$ classes;
\item $\eps^{-7}$ subspaces per class, from \cite{ES02};
\item $\eps^{-3}$ points per subspace, from a two-variable quantitative Ridout theorem
  \cite[Cor.~5.2]{BE08}.
\end{itemize}
Using Evertse and Ferretti's count \cite{EF13} gives $1+3+3=7$, which is the ``slightly larger
interval'' of \cite{Bug15} (Remark~\ref{rem:seventh}). The remaining factor $\eps^{-3}$ goes away
as follows.
\begin{itemize}
\item In the integer-base and steering-word problems, a plane through the points either has a
  constant \emph{gap} (the difference of the two exponents) or it does not. If it does not,
  eliminating one variable leaves an inequality that Liouville's inequality or integer
  arithmetic bounds, with no Subspace Theorem at all. If the gap is constant, the plane carries a
  genuine Ridout problem. We count such planes only for points with a large gap. The points with
  a small gap are handled once, globally, by one more two-variable application. In that
  application each line holds at most one point (or three) because of a coprimality.
\item For continued fractions, a subspace that avoids one exceptional hyperplane is handled by a
  three-variable application at a fixed exponent $\delta=3/4$. This costs $O(1)$. The exceptional
  hyperplane is handled once, globally.
\end{itemize}

\subsection*{Formalization}
Theorems~\ref{thm:A}--\ref{thm:C} are proved in Lean~4 with Mathlib, using only the three
standard axioms of Lean. So are all the results they use, including the quantitative Subspace
Theorem in the form of Theorem~\ref{thm:EF} and Lemma~6.1 of \cite{BE08}. The code is in the
repository \url{https://github.com/rwst/Subspace-Theorems}. Appendix~\ref{app:lean} lists, for
every numbered statement below, the Lean declaration that proves it.

\section{Tools}

For $x\in\Z^n$ write $|x|=\max_i|x_i|$, and $|\cdot|_l$ for the $l$-adic absolute value
normalized by $|l|_l=l^{-1}$.

The following is the form of the quantitative Subspace Theorem we use. It follows from
Evertse and Ferretti's theorem for systems of inequalities \cite[\S3]{EF13}, specialized to the
ground field $\Q$, to linear forms with coefficients in $\Z+\Z\alpha+\Z\alpha^2$, and to the
coordinates at finitely many primes.

\begin{theorem}[Evertse--Ferretti \cite{EF13}]\label{thm:EF}
Let $\alpha$ be a real algebraic number, $S$ a finite set of primes, and $2\le n\le4$. There is
$K=K(\alpha,S,n)>0$ with the following property. Suppose we are given:
\begin{itemize}
\item linear forms $L_1,\dots,L_n$ in $X_1,\dots,X_n$, with coefficients
  $c_0+c_1\alpha+c_2\alpha^2$ where $c_j\in\Z$ and $|c_j|\le B$ for some $B\ge1$, and with
  $|\det(L_1,\dots,L_n)|\ge\gamma$ for some $0<\gamma\le1$;
\item reals $C\ge1$ and $0<\delta\le1$;
\item exponents $e_i\le1$ with some $e_i=1$ and some $e_i\le1-\delta/(2n)$;
\item exponents $f_{l,i}\le0$ for $l\in S$, with $f_{l,i}=0$ for some $i$, for each $l$;
\item and $\sum_ie_i+\sum_{l\in S}\sum_if_{l,i}\le-\delta$.
\end{itemize}
Then the points $x\in\Z^n\setminus\{0\}$ with
\begin{gather*}
  |L_i(x)|\le C|x|^{e_i},\qquad |x_i|_l\le|x|^{f_{l,i}}\qquad(1\le i\le n,\ l\in S),\\
  \log|x|\ge K\delta^{-1}(1+\log B+\log C+\log\gamma^{-1})
\end{gather*}
lie in at most $K\delta^{-3}(1+\log\delta^{-1})^2$ proper linear subspaces of $\Q^n$.
\end{theorem}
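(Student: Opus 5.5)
We derive Theorem~\ref{thm:EF} as a specialization of the Evertse--Ferretti theorem for systems of inequalities \cite[\S3]{EF13}. The plan is to recast the data in its setting and check that the constants depend only on $(\alpha,S,n)$ and on $B$, $C$, $\gamma$ in the stated logarithmic way. That theorem is formulated over a number field $K_0$ with a finite set of places $S_0$, for linear forms $L_i^{(v)}$ at each $v\in S_0$, an exponent matrix $(c_{iv})$ with a normalization $\sum_{v,i}c_{iv}=-\delta$, and the twisted height $H_Q(x)$. Its conclusion is that the solutions with $H_Q(x)$ large lie in $\ll n^{O(n)}\delta^{-3}\log(\delta^{-1})\log(\delta^{-1}\log(\cdots))$ subspaces. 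Here $\log(\cdots)$ depends on the degree and height of the forms, so after fixing $\alpha$ it reduces to $K\delta^{-3}(1+\log\delta^{-1})^2$.

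\textbf{Places and forms.} Take $K_0=\Q$ and $S_0=\{\infty\}\cup S$. At $\infty$ use the given forms $L_1,\dots,L_n$, whose coefficients lie in the field $\Q(\alpha)$ of degree bounded in terms of $\alpha$; EF allow coefficients in $\overline{\Q}$ with a fixed embedding. At each $l\in S$ use the coordinate forms $X_1,\dots,X_n$. The inequality $|x_i|_l\le|x|^{f_{l,i}}$ is exactly $|X_i(x)|_l\le H(x)^{f_{l,i}}$, using $H(x)=|x|$ for primitive $x$; for non-primitive $x$ we divide by the gcd, which only helps the $l$-adic inequalities after adjusting. This is a point to check, since $|x|$ drops while $|x_i|_l$ grows. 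Otherwise we restrict to primitive points, since the subspace count is unchanged.

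\textbf{Normalization.} EF require $\sum c_{iv}=0$ normalized (their form $\prod_v\prod_i|L_i^{(v)}(x)|_v/|x|\ldots\le H^{-\delta}$). Our hypothesis $\sum e_i+\sum f_{l,i}\le-\delta$ allows slack, so we replace one exponent, namely the $e_i=1$ or some $f$, to make equality hold with $-\delta$; decreasing exponents weakens nothing. The conditions "some $e_i=1$" and "some $f_{l,i}=0$" serve as the normalization of max-exponent $0$ after shifting by $1$ at infinity, i.e.\ $c_{i\infty}=e_i-1\le0$. The condition "some $e_i\le1-\delta/(2n)$" is harmless, or is needed for a reduction step in which $C$ is absorbed. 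The constant $C$ and the determinant $\gamma$ enter via the height of the system: $|L_i(x)|\le C|x|^{e_i}$ becomes $|L_i(x)|\le |x|^{e_i+\log C/\log|x|}$, and the lower bound $\log|x|\ge K\delta^{-1}\log C$ makes that perturbation at most $\delta/(2K)$, so it is absorbed by replacing $\delta$ with $\delta/2$. Similarly, the EF height $\mathcal H$ of the system is $\ll B^{O(1)}\gamma^{-1}$, because the coefficients have height $\ll B$ times a constant depending on $\alpha$.

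\textbf{Main obstacle.} The main work is bookkeeping with EF's precise hypotheses: whether they bound the large solutions via $\log H\ge$ something like $\delta^{-1}\log(\mathcal H)$ and count in terms of $\delta^{-3}\log^2$, and whether the $\gamma$-dependence appears linearly in the logarithm. I would first write down their Theorem 3.1/Corollary exactly, then compute the field degree $d\le3$ of $\Q(\alpha)$ (or use degree of $\alpha$; only $\Z+\Z\alpha+\Z\alpha^2$ matters for the heights), and check that every $n$- and $d$-dependent factor goes into $K$. The restriction $2\le n\le4$ just makes $K$ uniform.
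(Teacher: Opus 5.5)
Your route is the paper's: Theorem~\ref{thm:EF} is Evertse--Ferretti specialized to the ground field $\Q$, the places $\{\infty\}\cup S$, the given forms at $\infty$ and the coordinate forms at each $l\in S$. The step that fails is how you dispose of non-primitive points. The hypotheses are not invariant under $x\mapsto x/g$ with $g=\gcd(x)$. Dividing by $g$ multiplies $|x_i|_l$ by $|g|_l^{-1}$, but multiplies $|x|^{f_{l,i}}$ only by $g^{-f_{l,i}}$. So for $g$ a power of $l$, the $l$-adic condition for $x/g$ follows from the one for $x$ only when $f_{l,i}\le-1$, whereas the applications use exponents in $(-1,0)$.

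This really happens. Take $b=6$ in Proposition~\ref{prop:planesA}: a point $x=(6^t,6^r,p)$ with $r\ge1$, $p$ even and $3\nmid p$ is divisible by $2$, and $x/2$ violates the $2$-adic condition $|x_1|_2\le|x|^{-\log2/\log6}$. Moreover, $\log|x/g|$ can fall below the threshold $K\delta^{-1}(1+\log B+\log C+\log\gamma^{-1})$ even when $\log|x|$ is above it. So $x/g$ need not be a solution, a bound for primitive solutions does not cover $x$, and the remark that ``the subspace count is unchanged'' does not repair this.

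What works is not to normalize at all. In EF's parametric formulation the twisted height is invariant under scaling, by the product formula. So you evaluate every local factor at the integer vector $x$ itself; the primes outside $S$ contribute $\max_i|x_i|_p\le1$. Then take the parameter $Q$ to be a suitable fixed power of $|x|$. The largeness hypothesis becomes a lower bound on $Q$, i.e.\ on $|x|$ and not on $H(x)=|x|/\gcd(x)$, which is exactly how the statement is phrased.

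Second, your description of the count would destroy the uniformity that the statement asserts. If the logarithmic factor in EF's bound depended on the heights of the forms, fixing $\alpha$ would not make it independent of $B$ and $\gamma$. Yet uniformity in $B$, $C$, $\gamma$ is essential. For instance, in step (iii) of Proposition~\ref{prop:countB} the forms $z_jL_i$ and the determinant $\pm z_j^2\Lambda$ depend on the points, and the conclusion must still be $O(1)$ planes.

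The fact you need is that EF's number of subspaces containing the large solutions depends only on $n$, $\delta$ and the degree of the field generated by the coefficients. Here that degree is $\deg\alpha$, which is arbitrary rather than at most $3$; it is fixed by $\alpha$ and goes into $K$. The parameters $B$, $C$ and $\gamma$ enter only the lower bound on $Q$, and that is where the term $K\delta^{-1}(1+\log B+\log C+\log\gamma^{-1})$ comes from. Once these two points are settled, the rest of the bookkeeping you describe is the routine part: normalizing via ``some $e_i=1$'' and ``some $f_{l,i}=0$'', and absorbing $C$ at the cost of halving $\delta$.
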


The exponent $3$ of $\delta^{-1}$ is what makes $1/4$ possible.
Theorem~\ref{thm:B} needs only $S=\emptyset$, and Theorem~\ref{thm:C} only $\alpha$ rational.

\begin{lemma}\label{lem:window}
Let $(t_n)$ be integers with $2t_n<t_{n+1}$ for all $n$, and let $R\ge1$. The indices
$n\ge n_0$ with $t_n\le R\,t_{n_0}$ number at most $\log R/\log2+1$.
\end{lemma}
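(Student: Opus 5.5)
The plan is to show that the sequence grows at least geometrically from $n_0$ on. The admissible indices then form an initial segment $\{n_0,\dots,n_0+k\}$, and the length $k$ is read off from the growth rate. Throughout I assume $t_{n_0}>0$. This holds in every application, where the $t_n$ are logarithms of heights or lengths and are positive. The assumption is genuinely needed: for a constant negative sequence such as $t_n=-10$ we have $2t_n<t_{n+1}$, yet with $R=1$ every index $n\ge n_0$ qualifies. So the lemma is read, and would be applied, with $t_{n_0}\ge1$.

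\emph{Step 1: geometric growth.} By induction on $k\ge0$ I show $t_{n_0+k}\ge 2^k t_{n_0}$, with strict inequality for $k\ge1$. The case $k=0$ is trivial. For the inductive step, $t_{n_0+k+1}>2t_{n_0+k}\ge 2^{k+1}t_{n_0}$. In particular every $t_n$ with $n\ge n_0$ is positive. The sequence is also strictly increasing from $n_0$ on, since $t_{n+1}>2t_n>t_n$ once $t_n>0$.

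\emph{Step 2: the count.} Because the sequence is strictly increasing on $n\ge n_0$, the set of $n\ge n_0$ with $t_n\le R\,t_{n_0}$ is an initial segment. It is nonempty because $n_0$ belongs to it, as $R\ge1$. Write it as $\{n_0,n_0+1,\dots,n_0+k\}$; it is finite, because by Step 1 we have $t_{n_0+j}\ge 2^j t_{n_0}\to\infty$. For its last element, Step 1 gives
\[
2^k t_{n_0}\le t_{n_0+k}\le R\,t_{n_0}.
\]
Dividing by $t_{n_0}>0$ gives $2^k\le R$, that is $k\le \log R/\log 2$. The number of indices is $k+1\le \log R/\log2+1$, as claimed.

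There is no real obstacle here. The only point needing care is the positivity of $t_{n_0}$: it is what makes the growth geometric, and it makes the division in Step 2 legitimate. I would state this hypothesis explicitly, or check that the Lean statement carries it, for instance through $t_n\ge1$ or natural-number-valued $t_n$.
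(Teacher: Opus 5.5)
Your proof is correct and is the paper's argument: the paper's entire proof is the inequality $2^k t_{n_0}\le t_{n_0+k}$, and you have written out its induction and the resulting count. Your caveat is also right. Read literally for arbitrary integers, the lemma fails, as your example $t_n=-10$ shows, so one needs $t_{n_0}\ge0$; this holds in every application in the paper, where the windowed quantities are positive.
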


\begin{proof}
$2^kt_{n_0}\le t_{n_0+k}$.
\end{proof}

We call the set in Lemma~\ref{lem:window} a \emph{window of ratio $R$} at $n_0$.

\section{Integer bases: proof of Theorem~\ref{thm:A}}\label{sec:A}

Let $b\ge2$ and let $\xi$ be real, algebraic, irrational, of degree $d$. We may assume
$0<\xi<1$. Suppose $p(n,\xi,b)\le c\,n(\log n)^u$ for all large $n$, with $0\le u<1/4$, and put
$v=(u+1/4)/2$. Liouville's inequality gives $A\ge1$ with
\begin{equation}\label{eq:liou}
  |c_0+c_1\xi|\ge(AZ^d)^{-1}\qquad\text{for }(c_0,c_1)\in\Z^2\setminus\{0\},\ |c_0|,|c_1|\le Z,\
  Z\ge1.
\end{equation}

\subsection{The points}
The following is Lemma~6.1 of \cite{BE08}.

\begin{lemma}[Bugeaud--Evertse]\label{lem:61}
There are $C\in\N$, integers $0\le r_n<t_n$ and $p_n\in\Z$ such that, for every $n$,
\begin{gather*}
  2t_n<t_{n+1},\qquad t_n\le(2n+2)^{C(n+1)},\qquad |p_n|\le b^{t_n},\qquad
  b\nmid p_n\text{ if }r_n\ge1,\\
  |b^{t_n}\xi-b^{r_n}\xi-p_n|\le(b^{t_n})^{-(\log t_n)^{-v}}.
\end{gather*}
\end{lemma}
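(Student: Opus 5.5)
The plan follows the combinatorial argument of \cite[\S6]{BE08}: turn the complexity hypothesis into a repeated block occurring early in the expansion, and then read off the Diophantine approximation from that repetition.

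\emph{Step 1: a repetition from pigeonhole.} Let $a=(a_j)$ be the $b$-ary digits of $\xi$, and fix $n$ large. Look at the $p(m,\xi,b)+1$ blocks of length $m$ that start at positions $1,\dots,p(m,\xi,b)+1$. By pigeonhole, two of them coincide. So there are positions $r<s\le p(m)+1$ with $a_{r+1}\cdots a_{r+m}=a_{s+1}\cdots a_{s+m}$. The word beginning at position $r$ therefore has the form $U V^{w}V'$: a prefix $U$ of length $r$, then a period $V$ of length $s-r\le p(m)\le c\,m(\log m)^u$ that repeats for total length $m+(s-r)$. We choose $m$ depending on $n$. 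The ratio of repeated length to the total length $s+m$ is then at least about $m/(c\,m(\log m)^u+m)\gg(\log m)^{-u}$.

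\emph{Step 2: the approximation.} Set $t=s$ (the period end), $r_n=r$, and let $p_n$ be the integer whose base-$b$ digits encode $UV$ minus $U$. Concretely, $p_n=\lfloor b^t\xi\rfloor-\lfloor b^r\xi\rfloor$, adjusted so that $b^t\xi-b^r\xi-p_n$ is the difference of the two tails. These tails agree on the first $m$ digits, so
\[
|b^t\xi-b^r\xi-p_n|\le b^{-m}.
\]
Since $m\ge t(\log t)^{-u}/c'$, this is at most $(b^t)^{-(\log t)^{-v}}$ once $t$ is large, because $v>u$ absorbs the constant. If $r\ge1$ and $b\mid p_n$, we shift $r$ and $t$ down by one. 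This keeps the repetition and shortens the prefix, so after finitely many shifts either $r=0$ or $b\nmid p_n$. The bound $|p_n|\le b^{t}$ is immediate.

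\emph{Step 3: growth and the size bound.} We need to choose a sequence $m_n$ so that the resulting $t_n$ satisfy $2t_n<t_{n+1}$ and $t_n\le(2n+2)^{C(n+1)}$. Take $m_{n+1}$ larger than a fixed multiple of $t_n$; since $t_{n+1}>m_{n+1}$ is not automatic, we force $t_{n+1}>2t_n$ by taking $m_{n+1}\ge 2t_n$, where the inequality $t\ge s$ is understood through the exponent. Then $t_{n+1}\le p(m_{n+1})+1\le c\,m_{n+1}(\log m_{n+1})^u\le m_{n+1}^2$. Iterating gives $t_n\le\exp(\exp(O(n)))$ at worst, which is too big. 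So instead we take $m_{n+1}\asymp t_n$ and use $t_{n+1}\ll t_n(\log t_n)^{u}$. The factor $(\log t_n)^u$ compounds to at most $(Cn)^{Cn}$, which gives the stated form $(2n+2)^{C(n+1)}$.

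\emph{Main obstacle.} The hard part is reconciling the lower growth $2t_n<t_{n+1}$ with $t_n\le s$ coming from pigeonhole. We need $t_{n+1}>2t_n$ while only knowing $t_{n+1}\le p(m_{n+1})+1$. I would apply pigeonhole only to windows of starting positions beyond $2t_n$: use blocks starting at $2t_n+1,\dots,2t_n+p(m)+1$, and absorb the longer prefix. This increases $t$ by the additive $2t_n$ and halves the relative repetition length by at most a constant factor, which is again absorbed by $v>u$.
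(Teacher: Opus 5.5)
Your Steps 1 and 2 are the standard repetition-to-approximation mechanism, and you correctly use $v>u$. The paper does not reprove this lemma: it quotes Lemma~6.1 of Bugeaud--Evertse and only remarks that it holds, and is needed, for $v>u$.

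\textbf{The gap.} It lies in the growth condition $2t_n<t_{n+1}$. The first half of your Step~3 does not force it at all, since knowing $t\le p(m)+1$ with $m\ge 2t_n$ gives no lower bound on $t$. The only mechanism you end up with is to place the window of starting positions beyond $2t_n$, so that $t>r>2t_n$. But you then apply the shift of Step~2: as long as $r\ge1$ and $b\mid p$ (that is, $a_r=a_t$), you replace $(r,t)$ by $(r-1,t-1)$. Nothing stops this left extension of the repetition from running back past position $2t_n$. If the digits are $q$-periodic, with $q=t-r$, from some $r'\le 2t_n-q$ up to position $t+m$, the shifted pair is $(r',r'+q)$ with $r'+q\le 2t_n$. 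You cannot stop the shift early, because then $b\mid p$ with $r\ge1$. So in your construction the requirements $b\nmid p_n$ and $t_{n+1}>2t_n$ conflict, and the growth condition is not proved.

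\textbf{Two ways to close it.}

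(a) Use that $\xi$ is algebraic. Suppose a pair $(r',t')$ has $t'\le 2t_n$ and a repetition of length at least $m$. Then $0<|(b^{t'}-b^{r'})\xi-p'|\le b^{-m}$, whereas \eqref{eq:liou} with $Z=b^{2t_n}$ gives the lower bound $(Ab^{2dt_n})^{-1}$. So if you take $m_{n+1}>2d\,t_n+\log A/\log b$, which is still $\asymp t_n$, every pair you produce, shifted or not, has $t>2t_n$. The window device then becomes unnecessary.

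(b) Keep your window and avoid Liouville. Suppose the maximal left extension stops at $r'$ with $r'+q\le 2t_n$. Since $a_i=a_{i+q}$ for $r'+1\le i\le r+m$, the word $a_{r'+1}\cdots a_{t+m}$ has period $q$. Take $t^*=r'+jq$ with $j$ minimal such that $t^*>2t_n$. Then:
\begin{itemize}
\item $t^*\le 2t_n+q<r+q=t$;
\item the pair $(r',t^*)$ carries a repetition of length $t+m-t^*>m$;
\item $a_{t^*}=a_{r'+q}\ne a_{r'}$ by maximality, so $b\nmid p$ when $r'\ge1$.
\end{itemize}

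With either fix the rest of your Step~3 goes through. You have $t_{n+1}\le 2t_n+p(m_{n+1})+1\ll t_n(\log t_n)^u$ with $m_{n+1}\asymp t_n$, which iterates to $t_n\le(2n+2)^{C(n+1)}$. The approximation exponent is handled exactly as in your Step~2, since shifting or re-selecting never decreases the ratio of repetition length to $t$.
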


\begin{remark}
\cite[Lemma~6.1]{BE08} is printed with the hypothesis $v<u$. It holds, and is used, for $v>u$.
This does not affect \cite[\S7]{BE08}.
\end{remark}

Put $x_n=(b^{t_n},b^{r_n},p_n)\in\Z^3$ and $s_n=t_n-r_n$, the \emph{gap} of $x_n$. Let
$\eps=(\log t_{2N})^{-v}$. Then $\eps^{-1}=N^{v+o(1)}$. For large $N$, $0<\eps\le1$, every $x_n$
with $N\le n<2N$ satisfies the inequality of Lemma~\ref{lem:61} with exponent $-\eps$, and
$\eps^{-1}$ is below any fixed multiple of $t_n\log b$. We count the $N$ points $x_n$,
$N\le n<2N$. Point $x_n$ has a \emph{large gap} if $\eps t_n<4s_n$, and a \emph{small gap} if
$4s_n\le\eps t_n$.

\subsection{Large gaps}
The first step is the three-variable step of \cite[\S7]{BE08}, with Theorem~\ref{thm:EF} in
place of \cite{ES02}.

\begin{proposition}\label{prop:planesA}
There is $K>0$ such that for $0<\eps\le1$ the points $(b^t,b^r,p)$ with $r<t$, $|p|\le b^t$,
$|b^t\xi-b^r\xi-p|\le(b^t)^{-\eps}$ and $K\eps^{-1}\le t\log b$ lie in at most
$K\eps^{-4}(1+\log\eps^{-1})^2$ proper subspaces of $\Q^3$.
\end{proposition}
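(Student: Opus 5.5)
Following \cite[\S7]{BE08}, we split the points into about $\eps^{-1}$ classes by the ratio $r/t$ and apply Theorem~\ref{thm:EF} once per class, with $n=3$, $\alpha=\xi$, $S$ the set of primes dividing $b$, and $\delta$ a fixed multiple of $\eps$. The total count is then $\eps^{-1}\cdot K\eps^{-3}(1+\log\eps^{-1})^2$.

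\emph{Linear forms.} Take
\[
  L_1=X_1,\qquad L_2=X_2,\qquad L_3=\xi X_1-\xi X_2-X_3 .
\]
Their coefficients lie in $\Z+\Z\xi$, so $B=1$, and the determinant is $\pm1$, so $\gamma=1$. At a point $x=(b^t,b^r,p)$ we have $|x|\le b^t$, and in fact $|x|=b^t$ because $|p|\le b^t$ and $r<t$. Hence:
\begin{itemize}
\item $|L_1(x)|=|x|^{1}$;
\item $|L_2(x)|=b^r=|x|^{r/t}$;
\item $|L_3(x)|\le|x|^{-\eps}$.
\end{itemize}
At each prime $l\mid b$, write $b=\prod_l l^{a_l}$. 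Then
\[
  |x_1|_l=|x|^{-a_l\log l/\log b},\qquad |x_2|_l=|x|^{-(r/t)a_l\log l/\log b},
\]
and we take $f_{l,3}=0$. Summing over $l\in S$ gives total $l$-adic exponent $-(1+r/t)$.

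\emph{Classes.} The archimedean exponent sum is $1+r/t-\eps$. It cancels against the $l$-adic sum $-(1+r/t)$, leaving $-\eps$. The trouble is that $r/t$ varies from point to point, whereas Theorem~\ref{thm:EF} needs fixed exponents. So partition $[0,1)$ into $\lceil 4/\eps\rceil$ intervals $I_j=[j\eps/4,(j+1)\eps/4)$, and put $x$ in class $j$ when $r/t\in I_j$. For class $j$ use the following exponents:
\begin{itemize}
\item archimedean: $e_1=1$, $e_2=(j+1)\eps/4$, $e_3=-\eps$;
\item $l$-adic: $f_{l,1}=-a_l\log l/\log b$ and $f_{l,2}=-(j\eps/4)\,a_l\log l/\log b$.
\end{itemize}
The inequalities hold for every point of the class, because both $b^r\le|x|^{e_2}$ and $|x_2|_l\le|x|^{f_{l,2}}$ go the right way. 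The total exponent sum is
\[
  1+(j+1)\eps/4-\eps-1-j\eps/4=-3\eps/4,
\]
so we take $\delta=3\eps/4$. The side conditions of Theorem~\ref{thm:EF} hold: $e_1=1$, $e_3=-\eps\le1-\delta/6$, all $e_i\le1$ (when $(j+1)\eps/4>1$, cap $e_2$ at $1$, which only helps), and $f_{l,3}=0$.

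\emph{Height condition and count.} With $C=1$, $B=1$, $\gamma=1$, the height requirement becomes $\log|x|=t\log b\ge K'\delta^{-1}$. This is exactly the hypothesis $K\eps^{-1}\le t\log b$ once $K$ is taken large. Each class then lies in at most $K'\delta^{-3}(1+\log\delta^{-1})^2\ll\eps^{-3}(1+\log\eps^{-1})^2$ proper subspaces. Summing over the $O(\eps^{-1})$ classes gives the stated bound, after enlarging $K$.

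\emph{Main obstacle.} Everything else is bookkeeping; the delicate step is the class partition. The class width must be $O(\eps)$, so that rounding $r/t$ costs only a constant fraction of $\eps$ in the exponent sum. This width is what produces the extra factor $\eps^{-1}$. The exponents must also be rounded in the right direction: up for the archimedean $e_2$, and down in absolute value for the $l$-adic $f_{l,2}$.
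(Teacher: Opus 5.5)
Your proof is correct and is essentially the paper's argument. You use the same forms $X_1$, $X_2$, $\xi X_1-\xi X_2-X_3$ at $\infty$ and the same $l$-adic exponents $-v_l(b)\log l/\log b$ (scaled by the lower end of the class for $X_2$), rounded in the same directions; the only difference is cosmetic: the paper takes $k=\lfloor 2/\eps\rfloor+1$ classes of width $1/k$, so $(\ell+1)/k\le1$ automatically and $\delta=\eps-1/k\ge\eps/2$, whereas your width $\eps/4$ gives $\delta=3\eps/4$ and requires capping $e_2$ at $1$ in the last class, which you handle correctly.
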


\begin{proof}[Sketch]
Let $k=\lfloor2/\eps\rfloor+1$ and sort the points by $\ell=\lfloor kr/t\rfloor$. For
$l\mid b$ put $g_l=-v_l(b)\log l/\log b$, so that $|b^r|_l=(b^r)^{g_l}$ and
$\sum_{l\mid b}g_l=-1$. On the class $\ell$ apply Theorem~\ref{thm:EF} with $n=3$, $S$ the
primes dividing $b$, and $\delta=\eps-1/k\ge\eps/2$:
\begin{itemize}
\item at $\infty$, the forms $X_1$, $X_2$, $\xi X_1-\xi X_2-X_3$ with exponents $1$,
  $(\ell+1)/k$, $-\eps$;
\item at $l\mid b$, the coordinates with exponents $g_l$, $g_l\ell/k$, $0$.
\end{itemize}
The exponents sum to $1/k-\eps$. Summing over the $k\le3/\eps$ classes gives the bound.
\end{proof}

\begin{lemma}\label{lem:liou}
There is $D\ge1$ with the following property. Suppose $z_1b^t+z_2b^r+z_3p=0$ with $|z_i|\le Z$,
$Z\ge1$, $z_3\ne0$, $r<t$ and $|b^t\xi-b^r\xi-p|\le1$. Then $t\log b\le D(1+\log Z)$.
\end{lemma}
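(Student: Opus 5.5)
We eliminate $p$ using the linear relation and then apply Liouville's inequality \eqref{eq:liou}. Since $z_3\ne0$, we have $p=-(z_1b^t+z_2b^r)/z_3$. Put $\theta=b^t\xi-b^r\xi-p$, so $|\theta|\le1$. Multiplying by $z_3$ gives
\[
  z_3\theta=z_3\xi b^t-z_3\xi b^r+z_1b^t+z_2b^r=(z_1+z_3\xi)\,b^t+(z_2-z_3\xi)\,b^r .
\]
Divide by $b^t$ and write $s=t-r\ge1$:
\[
  (z_1+z_3\xi)+(z_2-z_3\xi)\,b^{-s}=z_3\theta\,b^{-t},
  \quad\text{hence}\quad |z_1+z_3\xi|\le Z\,b^{-t}+2Z\,b^{-s}.
\]

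Now we split by the size of the gap $s$.

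\emph{Case 1: $s\ge t/2$.} Then $|z_1+z_3\xi|\le3Z\,b^{-t/2}$. The pair $(z_1,z_3)$ is nonzero because $z_3\ne0$, so \eqref{eq:liou} gives $|z_1+z_3\xi|\ge(AZ^d)^{-1}$. Combining the two bounds yields $b^{t/2}\le3AZ^{d+1}$, i.e. $t\log b\le2\log(3A)+2(d+1)\log Z$.

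\emph{Case 2: $s<t/2$, so $r>t/2$.} Here the single relation is not enough, and we must use the full three-term identity. Rewrite it as
\[
  \bigl((z_1+z_2)b^{r}\,\text{-free form}\bigr):\quad (z_1b^{s}+z_2)+z_3\xi\,(b^{s}-1)=z_3\theta\,b^{-r}.
\]
The left side is $c_0+c_1\xi$ with $c_0=z_1b^s+z_2$ and $c_1=z_3(b^s-1)\ne0$, both of size at most $2Zb^s$. Liouville then gives
\[
  (A(2Zb^s)^d)^{-1}\le Z\,b^{-r},
  \quad\text{so}\quad r\log b\le\log(2^dA)+(d+1)\log Z+ds\log b .
\]
Since $r>t/2>s$, this bounds $r$ only when $s$ is small compared with $r$, so the case needs a further subdivision. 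We take the threshold $s\le r/(2d)$. In that subcase $ds\le r/2$, which gives $r\log b\le 2\log(2^dA)+2(d+1)\log Z$; and since $t=r+s\le2r$, this bounds $t$. In the remaining range $r/(2d)<s<t/2$ we use the first estimate instead: $|z_1+z_3\xi|\le3Zb^{-s}$ together with Liouville bounds $s$, and $s>r/(2d)\ge t/(4d)$ then bounds $t$.

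Collecting all cases gives $t\log b\le D(1+\log Z)$ with $D$ depending only on $A$, $d$ and $b$.

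The main obstacle is the intermediate-gap regime. The first elimination controls only $s$ and the second controls only $r$ relative to $ds$, and making the two thresholds interlock is what produces constants depending on $d$. One could simply apply Case 1's argument with $s$ in place of $t/2$ throughout, but that alone fails when $s$ is tiny. This is why the second form, with coefficient $b^s-1$, is needed; it stays nonzero because $r<t$.
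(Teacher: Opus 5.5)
Your proof is correct and uses the same two applications of \eqref{eq:liou} as the paper: one to $z_1+z_3\xi$, and one to $(z_1b^s+z_2)+z_3(b^s-1)\xi$. The case split is unnecessary, though. Your first estimate $|z_1+z_3\xi|\le3Zb^{-s}$ gives $b^s\le3AZ^{d+1}$ for every gap, not only when $s\ge t/2$, so substituting it into $b^r\le2^dAZ^{d+1}b^{ds}$ bounds $b^t=b^rb^s$ directly, and this is exactly how the paper concludes.
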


\begin{proof}
Put $s=t-r$. Multiply the approximation by $z_3$ and use $z_3p=-z_1b^t-z_2b^r$:
\[
  b^r\,\Gamma=z_3(b^t\xi-b^r\xi-p),\qquad \Gamma=(z_3\xi+z_1)b^s+(z_2-z_3\xi).
\]
So $b^r|\Gamma|\le Z$, and in particular $|\Gamma|\le Z$. Apply \eqref{eq:liou} to $z_1+z_3\xi$,
which is nonzero: this gives $b^s\le AZ^{d+1}(2+|\xi|)$. Next write
$\Gamma=(z_1b^s+z_2)+z_3(b^s-1)\xi$. Here $z_3(b^s-1)\ne0$ and both coefficients are at most
$W=2Zb^s$. Applying \eqref{eq:liou} again gives $b^r\le AW^dZ$. Multiply the two bounds and take
logarithms. This gives the claim with $D=(d+2)^2(1+\log A+\log(2+|\xi|))$.
\end{proof}

\begin{lemma}\label{lem:planeA}
There is $K$ such that, for $0<\eps\le1$, every proper subspace $U$ of $\Q^3$ contains at most
$K(1+\log\eps^{-1})$ of the points $x_n$ with a large gap.
\end{lemma}

\begin{proof}
Let $i_1<i_2$ be the first two indices with $x_n\in U$. The cross product
$z=x_{i_1}\times x_{i_2}$ satisfies $|z_i|\le Z=2b^{2t_{i_2}}$, and every $x_n\in U$ satisfies
$z_1b^{t_n}+z_2b^{r_n}+z_3p_n=0$. Moreover $(z_2,z_3)\ne(0,0)$. Otherwise $x_{i_1}$ and
$x_{i_2}$ would be proportional, which forces $b\mid p_{i_2}$ with $r_{i_2}\ge1$.

If $z_3\ne0$, Lemma~\ref{lem:liou} gives $t_n\le R\,t_{i_2}$ with $R=D(2+2\log b)/\log b$. By
Lemma~\ref{lem:window}, at most $\log R/\log2+2$ points lie in $U$.

If $z_3=0$, then $z_2\ne0$ and $b^{s_n}=-z_2/z_1$. So every point of $U$ has the same gap
$s_0\le t_{i_1}$. On such a plane the problem is a genuine Ridout problem, for
$(b^{s_0}-1)\xi$. A large gap gives $\eps t_n<4s_0\le4t_{i_1}$, a window of ratio $4/\eps$ at
$i_1$. There are at most $3+2\log\eps^{-1}$ such points.
\end{proof}

\subsection{Small gaps}

\begin{lemma}\label{lem:linesA}
There is $K$ such that, for $0<\eps\le1$, the points $y=(b^t-b^r,p)$ with $r<t$, $|p|\le b^t$,
$|b^t\xi-b^r\xi-p|\le(b^t)^{-\eps}$, $4(t-r)\le\eps t$ and $K\eps^{-1}\le t\log b$ lie on at most
$K\eps^{-3}(1+\log\eps^{-1})^2$ lines of $\Q^2$.
\end{lemma}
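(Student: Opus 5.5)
The plan is a single application of Theorem~\ref{thm:EF} with $n=2$: in a small-gap point the factor $b^r$ carries almost all of the $l$-adic mass of $b^t$, so no splitting into classes is needed. Write $y=(y_1,y_2)=(b^t-b^r,p)$ and $s=t-r\ge1$. Then $y_1=b^r(b^s-1)>0$, so $y\ne0$. I intend to use the following facts.
\begin{itemize}
\item \emph{Size of $|y|$.} Since $|p|\le b^t$ and $0<y_1<b^t$, we have $|y|\le b^t$. Since $y_1\ge b^t-b^{t-1}\ge b^t/2$, we also have $|y|\ge b^t/2$.
\item \emph{$l$-adic size of $y_1$.} For a prime $l\mid b$, the number $b^s-1$ is coprime to $l$. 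Hence $|y_1|_l=|b^r|_l=(b^r)^{g_l}$, where $g_l=-v_l(b)\log l/\log b$ as in Proposition~\ref{prop:planesA}, and $\sum_{l\mid b}g_l=-1$.
\end{itemize}

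\textbf{The inequalities.} At $\infty$ I take the forms $L_1=X_1$ and $L_2=\xi X_1-X_2$, with $\det=-1$, so $\gamma=1$ and $B=1$. Their exponents are $e_1=1$, which holds trivially, and $e_2=-\eps$. The latter holds because $|L_2(y)|=|b^t\xi-b^r\xi-p|\le(b^t)^{-\eps}\le|y|^{-\eps}$; here the exponent is negative and $b^t\ge|y|$.

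At each $l\mid b$ I take $f_{l,1}=g_l(1-\eps/4)$ and $f_{l,2}=0$. The small-gap condition $4s\le\eps t$ gives $r\ge(1-\eps/4)t$. Since $g_l<0$ and $b^t\ge|y|$,
\[
|y_1|_l=b^{rg_l}\le(b^t)^{g_l(1-\eps/4)}\le|y|^{g_l(1-\eps/4)}.
\]

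\textbf{Checking the hypotheses of Theorem~\ref{thm:EF}.} The exponents sum to
\[
1-\eps-(1-\eps/4)=-3\eps/4,
\]
so I take $\delta=3\eps/4\le1$, $C=1$, and $S$ the set of primes dividing $b$. Some $e_i$ equals $1$. Some $e_i\le1-\delta/4$, namely $e_2=-\eps$. For each $l$ some $f_{l,i}=0$, namely $f_{l,2}$. All exponents are nonpositive where required.

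The height condition asks for $\log|y|\ge K_0\delta^{-1}$, with $K_0=K(\xi,S,2)$. It follows from $\log|y|\ge t\log b-\log2$ together with the hypothesis $K\eps^{-1}\le t\log b$, once $K$ is chosen large relative to $K_0$.

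\textbf{Conclusion.} Theorem~\ref{thm:EF} then puts all such $y$ into at most $K_0\delta^{-3}(1+\log\delta^{-1})^2\le K\eps^{-3}(1+\log\eps^{-1})^2$ proper subspaces of $\Q^2$, that is, lines.

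The only delicate point is the $l$-adic bookkeeping. The $l$-adic inequalities admit no constant $C$, so they must be stated exactly in terms of $|y|$. This works only because $|y|\le b^t$ and the exponents are negative, and because $b^s-1$ is an $l$-adic unit. The small-gap hypothesis $4s\le\eps t$ is exactly what keeps the loss $g_l\eps/4$ summed over $l$ down to $\eps/4$, leaving the margin $\delta=3\eps/4$.
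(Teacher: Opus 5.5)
Your proof is correct and is essentially the paper's argument: one application of Theorem~\ref{thm:EF} with $n=2$, using the forms $X_1$ and $\xi X_1-X_2$ at $\infty$ with exponents $1,-\eps$, the exponents $(1-\eps/4)g_l$ and $0$ at each $l\mid b$, and the small-gap bound $r\ge(1-\eps/4)t$. The differences are cosmetic: you take $\delta=3\eps/4$ where the paper takes $\delta=\eps/2$, and you use $l\nmid b^s-1$ to get the equality $|y_1|_l=(b^r)^{g_l}$, although the trivial bound $|b^s-1|_l\le1$ already suffices (so $b^s-1$ being an $l$-adic unit is not actually essential, contrary to your closing remark).
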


\begin{proof}
Apply Theorem~\ref{thm:EF} with $n=2$ and $\delta=\eps/2$.
\begin{itemize}
\item At $\infty$, take the forms $\xi X_1-X_2$ and $X_1$, with determinant $1$ and exponents
  $-\eps$ and $1$. Since $b^t/2\le|y|\le b^t$, we have $|\xi y_1-y_2|\le|y|^{-\eps}$.
\item At $l\mid b$, take the exponents $(1-\eps/4)g_l$ and $0$. Write $y_1=b^r(b^{s}-1)$. Since a
  small gap means $r\ge(1-\eps/4)t$, we get $|y_1|_l\le(b^r)^{g_l}\le|y|^{(1-\eps/4)g_l}$.
\end{itemize}
The exponents sum to $-\eps+1-(1-\eps/4)=-3\eps/4\le-\delta$.
\end{proof}

\begin{lemma}\label{lem:dvd}
Let $b\ne0$ be an integer. If $(b^{t_1}-b^{r_1})p_2=(b^{t_2}-b^{r_2})p_1$ with $r_1<t_1$ and
$r_1<r_2\le t_2$, then $b\mid p_2$.
\end{lemma}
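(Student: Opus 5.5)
Factor out powers of $b$ and compare. Write $b^{t_i}-b^{r_i}=b^{r_i}(b^{s_i}-1)$ with $s_i=t_i-r_i$. Then $s_1\ge1$ and $s_2\ge0$, and the hypothesis reads
\[
  b^{r_1}(b^{s_1}-1)\,p_2=b^{r_2}(b^{s_2}-1)\,p_1 .
\]
Divide both sides by $b^{r_1}$. This is legitimate since $b\ne0$, and $r_2-r_1\ge1$. We obtain
\[
  (b^{s_1}-1)\,p_2=b^{r_2-r_1}(b^{s_2}-1)\,p_1 .
\]
So $b$ divides $(b^{s_1}-1)p_2$.

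The key step is that $b^{s_1}-1$ is coprime to $b$. Indeed $s_1\ge1$, so $b^{s_1}-1\equiv-1\pmod b$. Hence $\gcd(b,b^{s_1}-1)=1$. By Euclid's lemma (Gauss's lemma for integers), $b\mid(b^{s_1}-1)p_2$ gives $b\mid p_2$.

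There is a degenerate case to check. If $s_2=0$, the right side vanishes. Then $(b^{s_1}-1)p_2=0$, and $b\mid p_2$ holds if $b^{s_1}\ne1$, since then $p_2=0$. The case $b^{s_1}=1$ can occur only for $b=\pm1$, and then $b\mid p_2$ trivially.

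So the general argument works even here, and the case needs no separate treatment. Likewise, for $b=\pm1$ the conclusion is automatic, so no assumption $|b|\ge2$ is needed.

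There is no real obstacle. The only care needed is to make the division by $b^{r_1}$ explicit, which holds in $\Z$ because $r_1\le r_2$ and both exponents are nonnegative. One must also note that the exponent $r_2-r_1$ is at least $1$, which is exactly where the hypothesis $r_1<r_2$ is used. The hypothesis $r_2\le t_2$ only ensures $s_2\ge0$, so that all quantities are integers.
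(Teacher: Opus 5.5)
Your proof is correct and is essentially the paper's argument: cancel $b^{r_1}$ to get $(b^{s_1}-1)p_2=b^{r_2-r_1}(b^{s_2}-1)p_1$ with $r_2-r_1\ge1$, then use that $b$ is prime to $b^{s_1}-1$. The paper gets that coprimality from the B\'ezout identity $b\cdot b^{s_1-1}-(b^{s_1}-1)=1$, you get it from $b^{s_1}-1\equiv-1\pmod b$; your separate degenerate-case discussion is harmless but, as you note yourself, unnecessary.
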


\begin{proof}
Cancel $b^{r_1}$:
$(b^{t_1-r_1}-1)p_2=b^{r_2-r_1}(b^{t_2-r_2}-1)p_1$, and the right-hand side is divisible by $b$.
Since $b\cdot b^{t_1-r_1-1}-(b^{t_1-r_1}-1)=1$, $b$ is prime to $b^{t_1-r_1}-1$.
\end{proof}

No line contains two points $y_n,y_m$ ($n<m$) with a small gap. A small gap gives $3t\le4r$, so
$4r_m\ge3t_m>6t_n>4r_n$. Lemma~\ref{lem:dvd} then gives $b\mid p_m$ with $r_m\ge1$, contrary to
Lemma~\ref{lem:61}.

\subsection{The count}
By Proposition~\ref{prop:planesA}, Lemma~\ref{lem:planeA}, Lemma~\ref{lem:linesA} and the
remark above,
\[
  N\le K\eps^{-4}(1+\log\eps^{-1})^2\cdot K(1+\log\eps^{-1})+K\eps^{-3}(1+\log\eps^{-1})^2
  \le K'\eps^{-4}(1+\log\eps^{-1})^3=N^{4v+o(1)}.
\]
This contradicts $4v<1$ for large $N$, and proves Theorem~\ref{thm:A}.\qed

\section{Continued fractions: proof of Theorem~\ref{thm:B}}\label{sec:B}

Let $\alpha=[0;a_1,a_2,\dots]$ be algebraic of degree at least three, with convergents
$p_\ell/q_\ell$ ($p_{-1}=q_0=1$, $q_{-1}=p_0=0$). Suppose
$p(\ell,a)\le C\ell(\log\ell)^\delta$ for all large $\ell$, with $0\le\delta<1/4$. Then $p(1,a)$
is finite, so $a_\ell\le M$ for some integer $M\ge1$.

\subsection{The points}
We use Bugeaud's quadratic approximations of \cite[\S3]{Bug13}. Let $w\ge1$, $r\ge2$ and
$1\le u\le r$ be integers with
\[
  a_{m+r}=a_m\quad(w<m\le w+u),\qquad a_w\ne a_{w+r}\text{ if }w\ge2.
\]
The continued fraction completed by periodicity after $a_w$ represents a quadratic number. Its
minimal polynomial $v_0X^2-(v_1+v_2)X+v_3$ has the coefficients
\begin{align*}
  v_0&=q_{w-1}q_{w+r}-q_wq_{w+r-1}, & v_1&=q_{w-1}p_{w+r}-q_wp_{w+r-1},\\
  v_2&=p_{w-1}q_{w+r}-p_wq_{w+r-1}, & v_3&=p_{w-1}p_{w+r}-p_wp_{w+r-1}.
\end{align*}
Put $H=q_wq_{w+r}$ and $Q=q_w$, and consider the forms
\[
  L_1=\alpha^2X_0-\alpha(X_1+X_2)+X_3,\qquad L_2=\alpha X_0-X_1,\qquad L_3=\alpha X_0-X_2.
\]
Their values at $v$ are $P_n(\alpha)$ and the left-hand sides of (3.2) and (3.3) of
\cite{Bug13}.

\begin{lemma}\label{lem:cfpoint}
If $H^\eps\le2^u$, then $v=(v_0,\dots,v_3)$ satisfies:
\begin{enumerate}
\item $|v_i|\le H$ and $H\le(M+1)^4|v_0|$;
\item $1\le Q$ and $Q^2\le H$;
\item $|L_1(v)|\le24H^{-1-\eps}$;
\item $Q^2/(2H)\le|L_2(v)|\le\min(2Q^2/H,\,3H^{-\eps/2})$;
\item $|L_3(v)|\le2H/Q^2$;
\item \textup{(}Bugeaud's Claim\textup{)} for every $z\in\R^4$ with $z\cdot v=0$,
  \[
    Q\,|z_0+(z_1+z_2)\alpha+z_3\alpha^2|\le12(M+1)^2\textstyle\sum_i|z_i|.
  \]
\end{enumerate}
\end{lemma}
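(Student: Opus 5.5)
The plan is to reduce everything to the standard continuant identities and to Bugeaud's estimates in \cite[\S3]{Bug13}, keeping track of explicit constants. Write $\beta$ for the quadratic number $[0;a_1,\dots,a_w,\overline{a_{w+1},\dots,a_{w+r}}]$. Let $\beta'$ be its tail after $a_w$, which is purely periodic. Then
\[
  \beta=\frac{p_w\beta'+p_{w-1}}{q_w\beta'+q_{w-1}},\qquad
  \beta'=\frac{P\beta'+P'}{Q_r\beta'+Q_r'},
\]
where $P,P',Q_r,Q_r'$ are the continuants of the block $a_{w+1}\cdots a_{w+r}$. Eliminating $\beta'$ gives $v_0\beta^2-(v_1+v_2)\beta+v_3=0$ with the $v_i$ as stated. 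Equivalently, $v$ is the vector of $2\times2$ minors of the pair of matrices $\begin{pmatrix}p_{w}&p_{w-1}\\ q_{w}&q_{w-1}\end{pmatrix}$ and $\begin{pmatrix}p_{w+r}&p_{w+r-1}\\ q_{w+r}&q_{w+r-1}\end{pmatrix}$.

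For (1), each $v_i$ is a difference of two products, each at most $q_wq_{w+r}$, and the two products have the same sign. So $|v_i|\le H$. For the lower bound on $|v_0|$, I would use $q_{w-1}/q_w$ and $q_{w+r-1}/q_{w+r}$, which are the continued fractions $[0;a_w,\dots,a_1]$ and $[0;a_{w+r},\dots,a_1]$. The condition $a_w\ne a_{w+r}$ (when $w\ge2$) separates these two ratios by at least $(M+1)^{-4}$ up to constants. When $w=1$ the first ratio is $1/a_1$, and the two ratios differ since $r\ge2$. Hence $|v_0|=q_wq_{w+r}\,|q_{w-1}/q_w-q_{w+r-1}/q_{w+r}|\ge H(M+1)^{-4}$. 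This separation estimate is the step I expect to be the main obstacle. I would handle it by bounding the derivative of $x\mapsto 1/(a+x)$ on $[0,1]$ with $a\le M$.

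Item (2) is immediate from $q_w\ge1$ and $q_{w+r}\ge q_w$.

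For (3)--(5), I would use $L_2(v)=q_wq_{w+r}\bigl(\alpha q_{w-1}/q_w\cdots\bigr)$, written as
\[
  L_2(v)=q_{w-1}(q_{w+r}\alpha-p_{w+r})-q_w(q_{w+r-1}\alpha-p_{w+r-1}).
\]
Both terms are controlled by $|q_k\alpha-p_k|\asymp 1/q_{k+1}$. The second term dominates and has size between $q_w/(2q_{w+r})$ and $2q_w/q_{w+r}$, and the first term has the same sign pattern. This gives $Q^2/(2H)\le|L_2|\le 2Q^2/H$. The bound $|L_2|\le 3H^{-\eps/2}$ comes from the agreement of $\alpha$ and $\beta$ on $w+r+u$ partial quotients. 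This agreement gives $|\alpha-\beta|\le q_{w+r+u}^{-2}$, and $q_{w+r+u}\ge 2^{u/2}q_{w+r}$, where the hypothesis $2^u\ge H^\eps$ enters. One then substitutes $v_1=\beta v_0-\cdots$ via Vieta: $L_2(v)=v_0(\alpha-\beta)+(\beta v_0-v_1)$, and the last term is $L_2$ evaluated with $\beta$, which is the small quantity by periodicity. Item (5) follows the same way with the roles of the indices swapped, giving the reciprocal size $H/Q^2$.

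For (3), I write $L_1(v)=v_0(\alpha-\beta)(\alpha-\bar\beta)$. Here $|\alpha-\bar\beta|\le 2$, and $|v_0||\alpha-\beta|\le H\cdot 12 H^{-2-\eps}$ by the same agreement estimate, using $q_{w+r+u}^2\ge 2^u q_{w+r}^2 \ge H^{\eps}H$.

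For (6), I follow Bugeaud's Claim. Put $\Lambda=z_0+(z_1+z_2)\alpha+z_3\alpha^2$, and suppose $z\cdot v=0$. Then
\[
  v_0\Lambda=z_1L_2(v)+z_2L_3(v)+z_3(\cdots)+z\cdot v\cdot(\text{const}),
\]
which expresses $\Lambda$ as a combination of $L_2(v)$, $L_3(v)$ and $\alpha$-multiples. More concretely, I use $z\cdot v=0$ to eliminate $v_3$ and bound $Q\Lambda$ via $|v_0|\ge H/(M+1)^4$ together with (4) and (5). The quantities $Q\,|L_2|/|v_0|$ and $Q\,|L_3|\cdot$ are then bounded by $O((M+1)^2)$ after combining the $Q^2/H$ and $H/Q^2$ estimates appropriately. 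I expect getting the exact constant $12(M+1)^2$ to require tracking these constants carefully.
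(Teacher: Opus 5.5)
Your handling of (1), (2), (5) and of the two-sided bound $Q^2/(2H)\le|L_2(v)|\le2Q^2/H$ agrees with the paper. There the errors $e_k=q_k\alpha-p_k$ alternate in sign, so the two terms of $L_2(v)$ reinforce.

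The genuine gap is item (3). The inequality you invoke, $q_{w+r+u}^2\ge2^uq_{w+r}^2\ge H^{1+\eps}$, gives only $|\alpha-\beta|\le H^{-1-\eps}$. Hence $|v_0||\alpha-\beta|\le H^{-\eps}$, not $12H^{-1-\eps}$. With $|\alpha-\bar\beta|=O(1)$ your argument proves only $|L_1(v)|=O(H^{-\eps})$, a full factor $H$ short.

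A better bound for $|\alpha-\beta|$ alone cannot close this. In general $|\alpha-\beta|$ is of order $q_{w+r+u}^{-2}\ge(M+1)^{-2u}q_{w+r}^{-2}$. That is far above $H^{-2-\eps}$ once $q_w$ is large compared with $(M+1)^u$. The hypothesis $H^\eps\le2^u$ allows this, and in the application $w$ can be much larger than $u$. Moreover $|\alpha-\bar\beta|\le2$ is false for small $q_w$: $\bar\beta-p_w/q_w=\pm q_w^{-2}(\bar\beta'+q_{w-1}/q_w)^{-1}$ can have size about $M$.

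The saving must come from the factor $v_0(\alpha-\bar\beta)$, which is much smaller than $H$. By Vieta, $v_0(\alpha-\bar\beta)=L_2(v)+L_3(v)-v_0(\alpha-\beta)$, so $|v_0(\alpha-\bar\beta)|\le4H/Q^2+H|\alpha-\beta|$. Together with $|\alpha-\beta|\le q_{w+r+u}^{-2}\le2H^{-\eps}q_{w+r}^{-2}$ this gives $|L_1(v)|\le8H^{-1-\eps}+4H^{-1-2\eps}\le12H^{-1-\eps}$. Equivalently, $L_1(v)=e_{w-1}e_{w+r}-e_we_{w+r-1}$. Viewed as a function of $\alpha$, this vanishes at $\beta$ and has derivative $O(q_{w+r}/q_w)$ on the cylinder containing both.

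The bound $|L_2(v)|\le3H^{-\eps/2}$ in (4) is also not obtained. Your decomposition $L_2(v)=v_0(\alpha-\beta)+(\beta v_0-v_1)$ is correct, but the second term is not small. From $\beta=(p_{w+r}\beta'+p_{w+r-1})/(q_{w+r}\beta'+q_{w+r-1})$ one gets $\beta v_0-v_1=\pm(q_w\beta'+q_{w-1})/(q_{w+r}\beta'+q_{w+r-1})$. This has size $\asymp Q^2/H$, as large as $L_2(v)$ itself, so the agreement of $\alpha$ and $\beta$ plays no role here. The missing ingredient is $u\le r$, which you never use: $|L_2(v)|\le2q_w/q_{w+r}\le2\cdot2^{-(r-1)/2}\le2\sqrt2\cdot2^{-u/2}\le2\sqrt2\,H^{-\eps/2}$.

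Finally, in (6) your identity is the right one: $v_0\Lambda=z_1L_2(v)+z_2L_3(v)+z_3(\alpha L_2(v)+\alpha L_3(v)-L_1(v))$. But dividing by $|v_0|\ge H/(M+1)^4$ gives only $Q^2|\Lambda|\le28(M+1)^4\sum_i|z_i|$. So $Q|L_3(v)|/|v_0|$ is $O((M+1)^4/Q)$, not $O((M+1)^2)$ as you claim. To reach $12(M+1)^2$, combine this with the trivial bound $|\Lambda|\le\sum_i|z_i|$ via $Q|\Lambda|=(Q^2|\Lambda|\cdot|\Lambda|)^{1/2}$.
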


\begin{proof}[Sketch]
The upper bounds in (3)--(5) are (3.2)--(3.4) of \cite{Bug13}, together with the length $u$ of
the repetition. Inequality (1) uses $a_w\ne a_{w+r}$: the recurrence
$q_\ell=a_\ell q_{\ell-1}+q_{\ell-2}$ leaves in $v_0$ a term of size at least $H/(M+1)^3$. The
lower bound in (4) uses that the errors $q_\ell\alpha-p_\ell$ alternate in sign and satisfy
$q_{\ell+1}|q_\ell\alpha-p_\ell|\ge1/2$.
\end{proof}

\begin{lemma}\label{lem:config}
Suppose $a$ has at most $p$ factors of length $\ell\ge12$. Then there is a configuration as above
with $\ell\le4u$ and $w+r\le2p+\ell+1$.
\end{lemma}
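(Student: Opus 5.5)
The argument is the standard pigeonhole on the first $p+1$ windows of length $\ell$, as in \cite[\S3]{Bug13} and \cite[Lemma~6.2]{BE08}. Consider the $p+1$ factors $a_{m+1}\cdots a_{m+\ell}$ for $0\le m\le p$. Since $a$ has at most $p$ distinct factors of length $\ell$, two of them coincide: there are $0\le m_1<m_2\le p$ with
\[
  a_{m_1+j}=a_{m_2+j}\qquad (1\le j\le\ell).
\]
Put $r_0=m_2-m_1\le p$.

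Suppose first that $r_0\ge2$. Extend the coincidence to the left: let $w\ge1$ be the smallest index with $w\le m_1+1$ such that $a_{m+r_0}=a_m$ for all $w<m\le m_1+\ell$. Then either $w=1$, or $a_w\ne a_{w+r_0}$, which is the required condition at $a_w$. Take $r=r_0$. The repetition has length at least $m_1+\ell-w\ge\ell-1$, and we let $u$ be that length, capped at $r$ so that $1\le u\le r$. Then $w+r\le m_1+1+r_0\le 2p+1\le 2p+\ell+1$.

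It remains to ensure $\ell\le4u$. If $r\ge\ell-1$, then $u\ge\ell-1\ge\ell/4$. If $r<\ell-1$, the repetition has period $r$ and length at least $\ell-1>r$. In that case I would not take $u=r$ but an integer multiple $kr$ of the period, with $kr\le\ell-1<(k+1)r$, replacing $r$ by $kr$. The periodicity $a_{m+kr}=a_m$ still holds on a stretch of length at least $\ell-1-(k-1)r\ge r$, which is $\ge kr/2$ when $k\le2$. For general $k$, taking $r'=kr$ and $u=\ell-1-(k-1)r$ gives $u\ge\ell-1-(\ell-1-r)=r$. I would need $4u\ge\ell$, so it is cleaner to choose $k=\lceil(\ell-1)/(2r)\rceil$. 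Then $r'=kr\approx(\ell-1)/2$ and the repetition length is at least $\ell-1-r'\ge(\ell-1)/2-r$. Since $r\ge1$ and $\ell\ge12$, one gets $u\ge\ell/4$, with $u\le r'$ and $r'\le\ell$, so $w+r'\le p+1+\ell$. The left-maximality at $a_w$ must be rechecked for the new period $r'$; I would redo the leftward extension with $r'$.

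The case $r_0=1$ is handled the same way: the stretch is constant, and taking the period $r'=\lceil\ell/2\rceil\ge2$ reduces it to the previous case.

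The main obstacle is the bookkeeping that simultaneously achieves $r\ge2$, $u\le r$, $\ell\le4u$, and minimality of $w$ after the period has been enlarged. The hypothesis $\ell\ge12$ is used exactly to absorb the rounding losses in that step.
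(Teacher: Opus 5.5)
Your approach is the paper's: pigeonhole on the $p+1$ windows starting at positions $0,\dots,p$, replace a short distance by a multiple of it, and slide the repetition to the left. The bounds $r\ge2$, $1\le u\le r$ and $w+r\le2p+\ell+1$, and the condition $a_w\ne a_{w+r}$ obtained from minimality of $w$, are all handled correctly.

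\textbf{The gap.} The one step that fails as written is the verification of $\ell\le4u$ after you enlarge the period. You bound the new repetition length by $\ell-1-r'$, observe that $\ell-1-r'>(\ell-1)/2-r$, and conclude $u\ge\ell/4$ ``since $r\ge1$ and $\ell\ge12$.'' That inference is invalid. The bound $(\ell-1)/2-r$ decreases in $r$, so a lower bound on $r$ cannot give it, and it is at least $\ell/4$ only when $r\le(\ell-2)/4$. For example, $\ell=12$ and $r=5$ give $k=\lceil 11/10\rceil=2$ and $r'=10$. Then your bound is $\ell-1-r'=1<3=\ell/4$. So the one nontrivial inequality of the lemma is not established by your argument.

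\textbf{The repair.} It is already in your text. With period $kr$, the periodicity $a_m=a_{m+kr}$ holds on a stretch of length $L\ge\ell-1-(k-1)r$, not merely $\ell-1-kr$. With $k=\lceil(\ell-1)/(2r)\rceil$ you have
$(k-1)r<(\ell-1)/2\le kr=r'$.
So both $L$ and $r'$ are at least $(\ell-1)/2$, and $u=\min(r',L)\ge(\ell-1)/2\ge\ell/4$. Redoing the leftward extension with period $r'$ only lengthens the stretch, so this survives. In the example above, the stretch has length at least $6$, so $u=6$.

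A simpler alternative is to enlarge the period only when $r<\ell/4$: for $\ell/4\le r<\ell-1$ the choice $u=r$ already works. With this correction, $\ell\ge12$ is not what makes the step work; it only guarantees $r'\ge2$. The paper does the same thing with threshold $\ell/3$ in place of $(\ell-1)/2$.
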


\begin{proof}
Two of the positions $0,\dots,p$ carry the same factor of length $\ell$. This gives a repetition
of length at least $\ell/3$ at distance $r\ge u$. Slide it to the left as far as it goes.
\end{proof}

\subsection{The count}
Let $T_0=\{X_1=X_2\}\subset\Q^4$.

\begin{proposition}\label{prop:countB}
There is $K$ with the following property. Let $0<\eps\le1$ and let $v_k$ $(k\in I)$ be points as
in Lemma~\ref{lem:cfpoint}, with heights $H_k$ such that $\log H_k\ge1$,
$\log H_{k+1}\ge2\log H_k$ and $\log H_k\ge K\eps^{-K}$. Then
$\#I\le K\eps^{-4}(1+\log\eps^{-1})^3$.
\end{proposition}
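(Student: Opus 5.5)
We follow the pattern of Section~\ref{sec:A}. There are three steps: a class/subspace count from Theorem~\ref{thm:EF}, a bound for the points on each subspace not contained in $T_0$, and a separate global treatment of $T_0$.

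\emph{Step 1: subspaces.} Let $Q_k=q_{w}$ for the configuration of $v_k$. By Lemma~\ref{lem:cfpoint}(2), $\lambda_k=\log Q_k/\log H_k\in[0,1/2]$. Put $k_0=\lfloor 2/\eps\rfloor+1$ and sort the points into classes according to $\lfloor k_0\lambda_k\rfloor$. On a class we take the four forms $L_1,L_2,L_3,X_0$ and use $|v|\asymp H$ from Lemma~\ref{lem:cfpoint}(1). The exponents are:
\begin{itemize}
\item $-1-\eps$ for $L_1$, from (3);
\item $-\eps/2$, or $2\lambda-1$, for $L_2$, from (4);
\item $1-2\lambda$ for $L_3$, from (5);
\item $1$ for $X_0$.
\end{itemize}
Rounding $\lambda$ to the class costs $O(1/k_0)$. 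The sum is $-\eps/2+O(1/k_0)\le-\eps/4$, and the coefficients lie in $\Z+\Z\alpha+\Z\alpha^2$. Theorem~\ref{thm:EF} with $n=4$, $S=\emptyset$ and $\delta\asymp\eps$ then gives $K\eps^{-3}(1+\log\eps^{-1})^2$ proper subspaces per class. This needs the height condition, which $\log H_k\ge K\eps^{-K}$ provides. Over all classes we get $K\eps^{-4}(1+\log\eps^{-1})^2$ subspaces.

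\emph{Step 2: subspaces $U\not\subseteq T_0$.} Here I would apply Theorem~\ref{thm:EF} a second time, inside $U$, at the fixed exponent $\delta=3/4$. Choose $z$ with $z\cdot v=0$ on $U$, built from cross products of the first few points in $U$, so that its height is polynomial in the height of an early point. Since $U\not\subseteq T_0$, we can arrange that $z_0+(z_1+z_2)\alpha+z_3\alpha^2\ne0$; the degree of $\alpha$ is at least three, so this combination does not vanish. Bugeaud's Claim (6) then bounds $Q_k$ polynomially in $|z|$. By Liouville, that non-vanishing combination is also bounded below by a power of $|z|$. Consequently, on $U$ the restricted system is a three-variable Subspace problem with a fixed exponent gain, which yields $O(1)$ subspaces of $U$. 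The heights are doubly exponential, so Lemma~\ref{lem:window} leaves only $O(1+\log\eps^{-1})$ points in each subspace. Together with Step~1 this gives $K\eps^{-4}(1+\log\eps^{-1})^3$.

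\emph{Step 3: points in $T_0$.} These are the points with $v_1=v_2$. For them $L_2(v)=L_3(v)$, so $|L_2(v)|\le\min(2Q^2/H,2H/Q^2)$, which forces $Q^2\approx H$ up to $H^{O(\eps)}$. I would handle all of them at once by a single application of Theorem~\ref{thm:EF} in the three coordinates $(v_0,v_1,v_3)$, with the forms $\alpha^2X_0-2\alpha X_1+X_3$, $\alpha X_0-X_1$ and $X_0$. This gives $\eps^{-3+o(1)}$ subspaces. Each of these is a plane in $T_0$, and a plane avoids the vanishing issue of Step~2, because a nonzero $z$ with $z_1=z_2$ yields a quadratic in $\alpha$, which is nonzero. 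So Claim~(6) together with Lemma~\ref{lem:window} again bounds each plane by $O(1+\log\eps^{-1})$ points. The total is $O(\eps^{-3}\log^3)$.

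The main obstacle is Step~2. The point is to ensure that the normal vector can be chosen with a nonvanishing quadratic combination and controlled height, so that Claim~(6) really bounds $t$-growth within a bounded window. The precise exponent bookkeeping at $\delta=3/4$ is the delicate part there.
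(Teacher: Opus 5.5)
Your Steps~1--3 follow the paper's steps (i), (iii) and (iv). There is a genuine gap, however: you are missing the paper's step (ii), namely that a subspace of dimension at most two contains only $O(\log\eps^{-1})$ of the points. Every branch of your count ends on such subspaces: the $O(1)$ subspaces of $U$ in Step~2, the planes of $T_0$ in Step~3, and any low-dimensional subspace already produced in Step~1. You dispose of all of them by citing Lemma~\ref{lem:window}. That lemma only counts indices inside a window. You must first show that every later point of the plane has $\log H\le R\log H_b$ for some earlier point $v_b$ on it, with $R=\eps^{-O(1)}$, and doubly exponential growth gives nothing of the kind.

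In Step~3 you try to get the window from Claim~(6), but the Claim bounds $Q$, not $H$. Your assertion that $T_0$ forces $Q^2\approx H$ is unfounded. The quantity $\min(2Q^2/H,2H/Q^2)$ is only an upper bound for $|L_2|$. Combining the lower bound in (4) with (5) when $L_2=L_3$ yields only $Q^2\le 2H$, which is weaker than (2). Nothing in Lemma~\ref{lem:cfpoint} bounds $H$ in terms of $Q$, so a bounded $Q$ does not confine $H$.

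The missing idea is Liouville's inequality for $E(x,y)=L_1(x)L_2(y)-L_1(y)L_2(x)$. Its $\alpha^3$ terms cancel, so for $x,y\in\Z^4$ it is an integer polynomial of degree at most two in $\alpha$. Since $\deg\alpha\ge3$, it vanishes only if its coefficients do, and otherwise it is bounded below by a negative power of $|x||y|$. Two points $v_a,v_b$ of the plane determine the others. Comparing this lower bound with the smallness of $L_1,L_2$ at the points bounds the height of every further point by $(cH_b^{4D+4})^{1/\eps}$. This is a window of ratio $O(\eps^{-1})$ in $\log H$, and only then does Lemma~\ref{lem:window} give $O(\log\eps^{-1})$ points. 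With this lemma, Step~3 needs no Claim at all.

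Two smaller points. In Step~2, the ``fixed gain'' works as follows. The Claim gives $Q\le|z|^{O(1)}$, so later points have $Q^2\le H^{1/4}$ and hence $|L_2|\le 2H^{-3/4}$. After eliminating a coordinate $x_j$ with $z_j\ne0$, the forms $z_jL_1,z_jL_2,z_jL_3$ have exponents $(-1,-3/4,1)$ and determinant $\pm z_j^2\Lambda$. This is where $\Lambda\ne0$ and its Liouville lower bound are used a second time. The earlier points lie in a window of ratio $O(1)$, since $|z|\le 6H_c^3$.

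In Step~1, the exponent sum is $-\eps+2/k_0$, not $-\eps/2+O(1/k_0)$. With $k_0=\lfloor2/\eps\rfloor+1$ the rounding cost $2/k_0$ can be almost $\eps$. Cells of length $\eps/4$, as in the paper, give $\sum e_i\le-\eps/2$.
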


\begin{proof}[Sketch]
The proof has four steps.

(i) \emph{First application.} Let $\rho=\log Q/\log H\in[0,1/2]$, and cover $[0,1/2]$ by at most
$10\eps^{-1}$ cells $[\rho^-,\rho^+]$ of length $\eps/4$. On one cell, $L_1,L_2,L_3,X_0$ are
bounded at $v$ by $24(M+1)^4|v|^{e_i}$, with
\[
  e=(-1-\eps,\ 2\rho^+-1,\ 1-2\rho^-,\ 1),\qquad \textstyle\sum_ie_i\le-\eps/2.
\]
The four forms have determinant $-1$. Theorem~\ref{thm:EF} with $n=4$, $S=\emptyset$ and
$\delta=\eps/2$ puts the points of a cell into $O(\eps^{-3}(1+\log\eps^{-1})^2)$ proper
subspaces of $\Q^4$. Altogether there are $O(\eps^{-4}(1+\log\eps^{-1})^2)$ subspaces.

(ii) \emph{Planes.} A subspace of dimension at most two contains $O(\log\eps^{-1})$ of the
points. Two of its points $v_a,v_b$ determine the others. Liouville's inequality for the
quadratic expression $L_1(x)L_2(y)-L_1(y)L_2(x)$ in $\alpha$ bounds the height of any further
point by $(cH_b^{4D+4})^{1/\eps}$. This is a window of ratio $O(\eps^{-1})$ in $\log H$, and
Lemma~\ref{lem:window} applies.

(iii) \emph{A proper subspace off $T_0$.} A proper subspace $U$ contains $O(\log\eps^{-1})$
points outside $T_0$. Let $v_a,v_b,v_c$ be the first three independent ones. Their cross product
$z$ is orthogonal to $U$ and has $|z|\le6H_c^3$. Since $v_a\notin T_0$,
$\Lambda=z_0+(z_1+z_2)\alpha+z_3\alpha^2\ne0$. By the Claim and the quadratic Liouville
inequality, $Q\le|z|^{O(1)}$. So every later point with $\log H\ge K(1+\log|z|)$ has
$Q^2\le H^{1/4}$.

Now eliminate a coordinate $x_j$ with $z_j\ne0$. The forms $z_jL_1,z_jL_2,z_jL_3$ in the
remaining three coordinates have determinant $\pm z_j^2\Lambda$ and exponents $(-1,-3/4,1)$. So
Theorem~\ref{thm:EF} with $n=3$ and the fixed value $\delta=3/4$ puts these points into $O(1)$
planes, each covered by (ii). The earlier points have $\log H\le9K\log H_c$, and there are $O(1)$
of them by Lemma~\ref{lem:window}.

(iv) \emph{The hyperplane $T_0$.} On $T_0$, the forms $L_1,L_2,X_0$ in $(X_0,X_1,X_3)$ have
determinant $1$ and exponents $(-1-\eps,0,1)$. Theorem~\ref{thm:EF} with $n=3$ and
$\delta=\eps$ puts the points of $T_0$ into $O(\eps^{-3}(1+\log\eps^{-1})^2)$ planes, each covered
by (ii).

Adding up gives $O(\eps^{-4}(1+\log\eps^{-1})^3)$.
\end{proof}

\begin{proof}[Proof of Theorem~\ref{thm:B}]
Take the lengths $\ell_k=E^{k+1}(k+1)^{2(k+1)}$ with $E$ large, so that $\log\ell_k\asymp k\log k$.
Lemma~\ref{lem:config} with $p=\lfloor C\ell_k(\log\ell_k)^\delta\rfloor$ gives configurations,
and so points $v_k$ with heights $H_k=q_wq_{w+r}$ satisfying
\[
  2^{\ell_k/8}\le H_k\le(M+1)^{2(w+r)}.
\]
For $E$ large these heights grow doubly exponentially. Choose $\eps_N$ with
$\eps_N^{-1}\asymp(\log\ell_{2N})^\delta\asymp(N\log N)^\delta$. Then $H_k^{\eps_N}\le2^u$ for
$N\le k<2N$, so Lemma~\ref{lem:cfpoint} applies with $\eps=\eps_N$. By
Proposition~\ref{prop:countB},
\[
  N\le K\eps_N^{-4}(1+\log\eps_N^{-1})^3=(N\log N)^{4\delta+o(1)},
\]
which fails for large $N$ since $4\delta<1$.
\end{proof}

\begin{remark}\label{rem:ninth}
The argument uses Theorem~\ref{thm:EF} only through its exponent $3$, at $n=3,4$. With a count
of $\delta^{-a}$ subspaces it gives every $\delta$ with $(a+1)\delta<1$. The count
$\delta^{-(n+4)}$ of Evertse--Schlickewei \cite{ES02} at $n=4$ gives $\delta<1/9$.
\end{remark}

\section{The steering word: proof of Theorem~\ref{thm:C}}\label{sec:C}

Write $(3/2)^n=m_n+\eps_n$ with $m_n$ the nearest integer, and $t_n=2m_{n+1}-3m_n$. A
\emph{repetition} of length $L$ at $a<c$ means $t_{a+i}=t_{c+i}$ for $0\le i<L$. By
\cite[Lemma~2.7]{St26}, a repetition forces $|\eps_c-\eps_a|\le(2/3)^L$. Put
$\lambda=\log2/\log3$. The numerical facts used are $\lambda\le2/3$, $2^{19}<3^{12}$,
$3^5<2^8$ and $3^{18}\ge648\cdot2^{18}$. Suppose $\pT(k)\le c\,k(\log k)^u$ for all large $k$,
with $0\le u<1/4$, and put $v=(u+1/4)/2$.

\subsection{The points}
The combinatorial half of \cite[Lemma~6.1]{BE08} applies to any word with finitely many factors
of each length.

\begin{lemma}\label{lem:violators}
There are $C\in\N$ and integers $r_n<\tau_n$ with
\[
  2\tau_n<\tau_{n+1},\qquad \tau_n\le(2n+2)^{C(n+1)},
\]
such that there is a repetition of length $\lceil\tau_n(\log\tau_n)^{-v}\rceil$ at
$(a_n,c_n)=(r_n+2,\tau_n+2)$.
\end{lemma}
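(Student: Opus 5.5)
The plan is to rerun the combinatorial half of the proof of \cite[Lemma~6.1]{BE08} on the steering word $T$, which has at most $5^k$ factors of length $k$. We use the standing hypothesis $\pT(k)\le c\,k(\log k)^u$ for large $k$, with $u<v$. The construction is inductive in $n$. Suppose $\tau_{n-1}$ has been chosen. Pick a length $k_n$ large compared with $\tau_{n-1}$, say $k_n$ about $(2n+2)^{C'(n+1)}$ for a constant $C'$ fixed later. The main point is to find a repetition of length about $k_n$ whose right end $\tau_n+2$ lies at most a factor $(\log k_n)^{v}$ or so beyond $k_n$.

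The pigeonhole step runs as follows. Put $L=\lceil\tau(\log\tau)^{-v}\rceil$ for the target $\tau$, and look at the $\pT(L)+1$ factors of length $L$ of $T$ that start at positions $2,3,\dots,\pT(L)+2$. Two of them coincide, at positions $a<c\le\pT(L)+2$. Since $\pT(L)\le cL(\log L)^u$, this gives
\[
c-2\le cL(\log L)^{u}\approx c\,\tau(\log\tau)^{u-v},
\]
which is smaller than $\tau$ once $\tau$ is large, because $u<v$. So $c$ lies below the intended $\tau+2$.

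Now set $\tau_n=c-2$ and $r_n=a-2$, so that $r_n<\tau_n$. Whether the repetition has length $\lceil\tau_n(\log\tau_n)^{-v}\rceil$ depends on the $\tau$ we started from, and this is the main obstacle. The actual $\tau_n$ may be much smaller than the target $\tau$, in which case the repetition is too long rather than too short. That is harmless, since a repetition can be truncated. The condition needed is
\[
L\ge\tau_n(\log\tau_n)^{-v},
\]
and it holds because $\tau_n\le\tau$ and $x(\log x)^{-v}$ is increasing for large $x$. We truncate the repetition to length exactly $\lceil\tau_n(\log\tau_n)^{-v}\rceil\le L$.

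Next comes growth. We need $\tau_n>2\tau_{n-1}$, but $c$ could be small. To prevent this, restrict the starting positions to the interval $[2\tau_{n-1}+2,\ 2\tau_{n-1}+2+\pT(L)]$. Pigeonhole then gives $a\ge2\tau_{n-1}+2$, hence $\tau_n\ge a-2+1>2\tau_{n-1}$. It also gives the upper bound
\[
\tau_n\le2\tau_{n-1}+\pT(L)\le2\tau_{n-1}+c\,\tau(\log\tau)^{u-v}.
\]
Choose $\tau=\tau^{(n)}$, for example $\tau^{(n)}=(2n+2)^{C(n+1)}/2$ with $C$ large. Then $\tau_n\le\tau^{(n)}$ for all large $n$, because $2\tau_{n-1}\le(2n)^{Cn}$ is small compared with $\tau^{(n)}$ and $(\log\tau)^{u-v}\to0$. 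The repetition condition then holds as argued above. The finitely many small $n$ are absorbed by enlarging $C$, or by shifting the indexing.

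The remaining care is bookkeeping. The offset $+2$ keeps indices away from the start of $T$, as needed for \cite[Lemma~2.7]{St26}. We also check the monotonicity of $x(\log x)^{-v}$ for $x\ge e^{v}$, and that the bound $\tau_n\le(2n+2)^{C(n+1)}$ holds for every $n$ by choosing $C$ large enough to cover the initial terms.
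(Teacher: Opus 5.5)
Your proposal is correct and is essentially the argument the paper invokes, namely the pigeonhole (combinatorial) half of \cite[Lemma~6.1]{BE08}: you take $\pT(L)+1$ starting positions from $2\tau_{n-1}+2$ on, with $L=\lceil\tau(\log\tau)^{-v}\rceil$, truncate the resulting repetition using that $x(\log x)^{-v}$ is increasing, and run an induction against the targets $(2n+2)^{C(n+1)}/2$ with $C$ fixed large at the outset. The one detail to tidy is the first step: start from a fictitious $\tau_{-1}\ge1$ so that $\tau_0\ge3$, which keeps $\log\tau_0>0$ and puts every $\tau_n$ in the range where $x(\log x)^{-v}$ is increasing.
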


Since $(2/3)^L\le3^{-L/3}$, a repetition of length $L\ge3c\eps$ at $a<c$ gives
$|\eps_c-\eps_a|\le(3^c)^{-\eps}$. The \emph{gap} of $(a,c)$ is $s=c-a$.

\subsection{Large gaps}
For $2\le a<c$, $s=c-a$ and $M=m_c-m_a\ge1$ put
\[
  x=(3^c,\ 2^s3^a,\ 2^cM)\in\Z^3,\qquad \mathcal N=x_0-x_1-x_2=2^c(\eps_c-\eps_a).
\]
Then $\mathcal N$ is odd, $|\mathcal N|<2^c$, and $|x|=3^c$.

\begin{proposition}\label{prop:planesC}
There is $K$ such that for $0<\eps\le1$ the points $x$ of the pairs with
$|\eps_c-\eps_a|\le(3^c)^{-\eps}$ and $K\eps^{-1}\le c\log3$ lie in at most
$K\eps^{-4}(1+\log\eps^{-1})^2$ planes.
\end{proposition}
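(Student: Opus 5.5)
The plan is to copy the proof of Proposition~\ref{prop:planesA}, with base $3$ at infinity, the prime $2$ in $S$, and $\alpha$ rational, so that the forms have integer coefficients. Put $k=\lfloor2/\eps\rfloor+1$ and sort the pairs $(a,c)$ into $k\le3/\eps$ classes by $\ell=\lfloor ka/c\rfloor$. On a class we apply Theorem~\ref{thm:EF} with $n=3$, $S=\{2,3\}$ and $\delta=\eps-1/k\ge\eps/2$. The coordinates are $x=(3^c,2^s3^a,2^cM)$, with $|x|=3^c$ and $\log|x|=c\log3$.

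\emph{At $\infty$.} Use the forms $X_1$, $X_2$ and $X_1-X_2-X_3$. Their determinant is $\pm1$ and their coefficients are bounded. We have $|x_0|=|x|$. Next, $|x_1|=2^s3^a\le3^{\lambda s+a}$, and I would rather bound it crudely by $3^{c}$; see the bookkeeping below. Finally $|\mathcal N|=2^c|\eps_c-\eps_a|\le2^c(3^c)^{-\eps}=|x|^{\lambda-\eps}$.

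\emph{At the primes.} At $l=3$ take $|x_0|_3=|x|^{-1}$, $|x_1|_3=3^{-a}\le|x|^{-\ell/k}$, and exponent $0$ on $x_2$. This last choice is correct, since $M$ can be divisible by $3$ and we just use $f=0$. At $l=2$ take $|x_2|_2\le2^{-c}=|x|^{-\lambda}$, $|x_1|_2=2^{-s}$, and exponent $0$ on $x_0$.

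\emph{Bookkeeping.} Choose the exponents at $\infty$ to be $1$ for $X_1$, $(\ell+1)/k+\lambda(1-\ell/k)$ for $X_2$ (this covers $|x_1|$ since $s\le c(1-\ell/k)$ and $a\le c(\ell+1)/k$), and $\lambda-\eps$ for the third form. The $2$-adic exponent on $x_1$ is $-\lambda s/c$. This depends on $s/c$, which is pinned down to within $1/k$ by the class, so we may take it to be $-\lambda(1-(\ell+1)/k)$. Summing everything gives
\[
1+\tfrac{\ell+1}{k}+\lambda\bigl(1-\tfrac{\ell}{k}\bigr)+\lambda-\eps-1-\tfrac{\ell}{k}-\lambda-\lambda\bigl(1-\tfrac{\ell+1}{k}\bigr)=\tfrac{1+\lambda}{k}-\eps.
\]
This is at most $-\eps/6$ once $k$ is taken as $\lfloor4/\eps\rfloor+1$ instead. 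So I would enlarge $k$ and take $\delta=\eps/6$, which changes only constants.

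Now check the remaining hypotheses of Theorem~\ref{thm:EF}. Some $e_i=1$ holds. We also need some $e_i\le1-\delta/6$. The third exponent $\lambda-\eps\le2/3$ does this, and the second exponent must be at most $1$. If it exceeds $1$, we replace it by $1$, since $|x_1|\le|x|$ always, and the sum only improves: if it is capped, the true bound is already smaller. For each prime some $f_{l,i}=0$ holds. The condition $K\eps^{-1}\le c\log3$ supplies the height lower bound, because $B,C,\gamma$ are absolute, so the needed threshold is $K\delta^{-1}$.

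The theorem then gives $K\delta^{-3}(1+\log\delta^{-1})^2$ proper subspaces per class, and summing over the $O(\eps^{-1})$ classes gives the claimed bound. Every proper subspace of $\Q^3$ lies in a plane.

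The main obstacle I expect is the exponent bookkeeping at infinity for $x_1=2^s3^a$. It mixes both primes, so its archimedean size and its $2$- and $3$-adic sizes must be read off from the same class parameter. If sorting by $a/c$ alone leaves too much slack, I would sort by the pair $(a/c,s/c)$. Since $s=c-a$, this is really a single parameter, so the class count stays $O(\eps^{-1})$.
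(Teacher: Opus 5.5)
Your proposal is correct and is essentially the paper's proof: sort the pairs into $O(\eps^{-1})$ classes by $a/c$ (the paper uses $s/c$, which is equivalent), and on each class apply Theorem~\ref{thm:EF} with $n=3$, $S=\{2,3\}$ and $C=1$. The only difference is that you use the form $X_0$ at infinity where the paper uses $X_2$ with exponent $1$ and gives $X_1$ the exponent $1-(1-\lambda)\ell/k$. Your choice leaves an excess of $(1+\lambda)/k$ rather than the paper's $1/k$, so it needs your larger $k$ and your capping of the $X_1$ exponent; the paper's choice avoids both, and its exponents sum to exactly $1/k-\eps$.
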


\begin{proof}
Let $k=\lfloor2/\eps\rfloor+1$ and sort the points by $\ell=\lfloor ks/c\rfloor$. On one class
apply Theorem~\ref{thm:EF} with $n=3$, $S=\{2,3\}$, $C=1$ and the exponents
\[
\begin{array}{c|ccc}
 & X_0-X_1-X_2 & X_1 & X_2\\\hline
 \infty & \lambda-\eps & 1-(1-\lambda)\ell/k & 1\\
 2 & 0 & -\lambda\ell/k & -\lambda\\
 3 & -1 & -(1-(\ell+1)/k) & 0
\end{array}
\]
At $2$ and $3$ the forms are the coordinates. The entries sum to $1/k-\eps=-\delta$ with
$\delta\ge\eps/2$. Each inequality is an identity between powers of $3^c$. Summing over the
$k\le3/\eps$ classes gives the bound.
\end{proof}

Fix a plane $w_0X_0+w_1X_1+w_2X_2=0$.

\begin{lemma}\label{lem:sameGap}
If $w_2\ne0$, any two points $(a_1,c_1)$, $(a_2,c_2)$ on the plane with the same gap satisfy
$7c_2<19c_1$.
\end{lemma}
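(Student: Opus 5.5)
Since both points lie on the plane and have the same gap $s$, I would write the two plane equations in terms of $s$ and compare them by $2$-adic divisibility. Assume, after relabelling, that $c_1\le c_2$; if $c_1=c_2$ there is nothing to prove, since $7c_2<19c_1$ is then immediate. For a point with gap $s$ we have $a=c-s$, and the plane equation reads
\[
  w_0 3^c+w_1 2^s3^{c-s}+w_2 2^cM=0,\qquad\text{that is,}\qquad 3^{c-s}\,A=-w_2\,2^cM,\quad A=w_0 3^s+w_1 2^s .
\]
The integer $A$ depends only on the plane and on $s$, so it is the same for both points.

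First I rule out $A=0$. If $A=0$, then $w_2M=0$. Since $w_2\ne0$ by hypothesis and $M=m_c-m_a\ge1$, this is impossible, so $A\ne0$.

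Next I eliminate $A$ and $w_2$. Dividing the equation for $(a_2,c_2)$ by the one for $(a_1,c_1)$ gives
\[
  3^{a_2-a_1}\,2^{c_1}M_1=2^{c_2}M_2,\qquad\text{hence}\qquad 2^{c_2-c_1}\mid 3^{a_2-a_1}M_1,
\]
and therefore $2^{c_2-c_1}\mid M_1$. Here $a_2-a_1=c_2-c_1\ge0$, so the power of $3$ is an integer. Because $M_1\ge1$, this divisibility gives $2^{c_2-c_1}\le M_1$.

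Then I bound $M_1$ from above. We have $M_1=m_{c_1}-m_{a_1}\le m_{c_1}\le(3/2)^{c_1}+1/2<2\,(3/2)^{c_1}$. Taking logarithms yields
\[
  c_2-c_1<1+c_1\Bigl(\tfrac{1}{\lambda}-1\Bigr),\qquad\text{so}\qquad c_2<\tfrac{c_1}{\lambda}+1 .
\]

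Finally I use the numerical fact $3^5<2^8$, which gives $1/\lambda<8/5$. Hence $c_2<8c_1/5+1$. The target $7c_2<19c_1$ follows as soon as $56c_1/5+7\le19c_1$, that is, $c_1\ge 35/39$. This holds because $c_1>a_1\ge2$.

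The only step that needs care is the one giving $A\ne0$. It is exactly where the hypothesis $w_2\ne0$ enters, together with $M\ge1$. Without it, the comparison of the two equations would not force the divisibility $2^{c_2-c_1}\mid M_1$. Everything else is elementary arithmetic.
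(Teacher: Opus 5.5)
Your proof is correct, and it takes a genuinely different route from the paper's. The paper eliminates $X_2$ and works with the small odd integer $\mathcal N=x_0-x_1-x_2=3^a(\beta_13^s+\beta_22^s)$. Equal gaps give $\mathcal N_2=3^{a_2-a_1}\mathcal N_1$, and the oddness of $\mathcal N_1$ makes it nonzero. Hence $3^{c_2-c_1}\le|\mathcal N_2|<2^{c_2}$, and $2^{19}<3^{12}$ turns this into $7c_2<19c_1$. You instead keep the coordinate $x_2=2^cM$ and compare $2$-adic rather than $3$-adic valuations, obtaining $2^{c_2-c_1}\mid M_1$. Here $M_1\ge1$ supplies nonvanishing, and the trivial bound $M_1<2(3/2)^{c_1}$ supplies the archimedean side. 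Your version needs neither the parity of $\mathcal N$ nor $|\mathcal N|<2^c$, and it uses only $3^5<2^8$, which is already among the paper's numerical facts. It also gives the sharper bound $2^{c_2}<2\cdot3^{c_1}$, i.e.\ $c_2<c_1/\lambda+1$, whereas the paper's inequality amounts to $c_2<c_1/(1-\lambda)$ (ratios about $1.58$ versus $2.71$). What the paper's route buys is uniformity with Lemma~\ref{lem:distinctGaps}, which reuses the representation $\mathcal N=\beta_13^c+\beta_22^s3^a$ and where the bound $|\mathcal N|<2^c$ does the real work. Two small corrections to your commentary. First, $A\ne0$ is not actually needed: multiplying the first equation by $3^{a_2-a_1}$ and comparing with the second gives $w_22^{c_2}M_2=w_22^{c_1}3^{a_2-a_1}M_1$, so $w_2\ne0$ enters only to cancel $w_2$. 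Second, the conclusion is not symmetric in the two points, so instead of ``relabelling'' you should dispose of the case $c_2<c_1$ directly as trivial.
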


\begin{proof}
Eliminating $X_2$ gives $\mathcal N=\beta_13^c+\beta_22^s3^a$ with $\beta_1,\beta_2\in\Q$ fixed by
the plane. So $\mathcal N_i=3^{a_i}(\beta_13^s+\beta_22^s)$ and
$\mathcal N_23^{a_1}=\mathcal N_13^{a_2}$. If $a_2>a_1$, then $3^{c_2-c_1}\le|\mathcal N_2|<2^{c_2}$,
because $\mathcal N_1$ is odd. If moreover $7c_2\ge19c_1$, then $3^{12c_2}\le3^{19(c_2-c_1)}
<2^{19c_2}$, which is false.
\end{proof}

\begin{lemma}\label{lem:threeTerms}
If $n_1\ne0$, $|n_2|\le m$ and $z=3^sn_1-2^sn_2\ne0$, then $3^s\le2(2m+1)^3|z|$.
\end{lemma}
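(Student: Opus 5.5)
The plan is a two-case split on which of the two terms of $z=3^sn_1-2^sn_2$ dominates, with no Diophantine input at all. Throughout, $n_1,n_2,s,m$ are integers with $s\ge0$ and $m\ge0$, and we use $|z|\ge1$, which holds because $z$ is a nonzero integer.

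\emph{Case 1: the first term dominates,} $3^s|n_1|\ge2\cdot2^s|n_2|$. Then
\[
  |z|\ge3^s|n_1|-2^s|n_2|\ge\tfrac12\,3^s|n_1|\ge\tfrac12\,3^s,
\]
using $|n_1|\ge1$. So $3^s\le2|z|\le2(2m+1)^3|z|$.

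\emph{Case 2: the second term dominates,} $3^s|n_1|<2\cdot2^s|n_2|$. Since $|n_1|\ge1$ and $|n_2|\le m$, this gives $(3/2)^s<2m$. In particular $m\ge1$, because if $m=0$ then $n_2=0$ and we are in Case 1. The remaining step is to turn this bound on $(3/2)^s$ into a bound on $3^s$, using only the elementary inequality $3\le(3/2)^3=27/8$. It gives
\[
  3^s\le\bigl((3/2)^3\bigr)^s=\bigl((3/2)^s\bigr)^3<(2m)^3\le(2m+1)^3\le2(2m+1)^3|z|.
\]

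The only point needing any care is Case~2. There the approach must not lose a factor depending on $s$, and the trick is the comparison $3\le(27/8)$. This is exactly why the exponent $3$ appears in the statement: the true exponent $\log3/\log(3/2)\approx2.71$ is rounded up to $3$. The factor $2$ comes from the threshold chosen in Case~1, and the summand $+1$ in $2m+1$ only makes the bound insensitive to whether $m=0$.
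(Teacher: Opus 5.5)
Your proof is correct and is essentially the paper's argument. The paper splits on $2^{s+1}m\le 3^s$ rather than on $3^s|n_1|\ge 2\cdot 2^s|n_2|$, getting $|z|\ge 3^s/2$ in the first case and $3^s<(2m+1)2^s$ in the second, hence $3^s<(2m+1)^3$ by cubing with $3\le 27/8$. Comparing the actual terms instead of their bounds $3^s$ and $2^sm$ changes nothing.
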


\begin{proof}
If $2^{s+1}m\le3^s$ then $|z|\ge3^s/2$. Otherwise $3^s<(2m+1)2^s$, and cubing gives
$3^s<(2m+1)^3$.
\end{proof}

\begin{lemma}\label{lem:distinctGaps}
Let $w_2\ne0$. If two points $(a_1,c_1)$, $(a_2,c_2)$ on the plane have different gaps, then
every point $(a,c)$ on it has $c<18(c_1+c_2)+18$.
\end{lemma}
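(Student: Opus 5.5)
Eliminate $X_2$ as in Lemma~\ref{lem:sameGap}. On the plane $w_0X_0+w_1X_1+w_2X_2=0$ with $w_2\ne0$ we have $x_2=-(w_0x_0+w_1x_1)/w_2$. Hence
\[
  w_2\mathcal N=(w_0+w_2)3^c+(w_1+w_2)2^s3^a=3^a\bigl(n_13^s-n_22^s\bigr)
\]
with integers $n_1=w_0+w_2$ and $n_2=-(w_1+w_2)$ fixed by the plane. Call this $(\ast)$. So every point on the plane has $w_2\mathcal N=3^az$, where $z=3^sn_1-2^sn_2$. The plan is to use the two given points to bound the integers $n_1,n_2$ (up to a common factor) in terms of $c_1,c_2$. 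Then we feed these bounds into $(\ast)$ for a general point and use the bounds $\mathcal N$ odd and $|\mathcal N|<2^c$.

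\emph{Step 1: the plane is determined by the two points.} From the two points $(a_i,c_i)$ we get two linear equations $n_13^{s_i}-n_22^{s_i}=w_2\mathcal N_i3^{-a_i}$. Since $s_1\ne s_2$, the vectors $(3^{s_1},2^{s_1})$ and $(3^{s_2},2^{s_2})$ are independent. Alternatively, we can take the plane's normal to be a cross product, as in Lemma~\ref{lem:planeA}. So we may scale $(w_0,w_1,w_2)$ to be the primitive cross product $x^{(1)}\times x^{(2)}$. Then $|w_j|\le2\cdot3^{c_1+c_2}$, and also $|n_1|,|n_2|\le m:=4\cdot3^{c_1+c_2}$.

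\emph{Step 2: $n_1\ne0$.} If $n_1=0$, then $(\ast)$ gives $w_2\mathcal N=-n_22^s3^a$. Since $\mathcal N$ is odd, $2^s$ divides $w_2$ for every point. The point is that $n_2=0$ too would force $\mathcal N=0$, contradicting oddness, so the right side is nonzero. For the two given points with different gaps, comparing 2-adic and 3-adic valuations then bounds $s$ and $a$ by $\log|w_2|$. So $c=a+s\le O(c_1+c_2)$, which is fine. In the main case $n_1\ne0$, and also $z\ne0$ because $\mathcal N\ne0$.

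\emph{Step 3: bound $c$ for a general point.} Apply Lemma~\ref{lem:threeTerms} to get $3^s\le2(2m+1)^3|z|$. From $(\ast)$, $3^a|z|=|w_2||\mathcal N|<|w_2|2^c$. Multiplying gives
\[
  3^c\le2(2m+1)^3|w_2|\,2^c .
\]
So $(3/2)^c\le 2\cdot(9\cdot3^{c_1+c_2})^3\cdot2\cdot3^{c_1+c_2}$, that is $(3/2)^c\ll 3^{4(c_1+c_2)}$. Since $\log3/\log(3/2)\approx2.71$, this yields $c\le 10.9(c_1+c_2)+O(1)$, comfortably below $18(c_1+c_2)+18$. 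I would check the explicit constants using the stated numerical facts ($3^5<2^8$, $2^{19}<3^{12}$).

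The main obstacle is the bookkeeping in Step 1. We need the entries of the normal vector, and hence $n_1,n_2$, bounded by a fixed power of $3^{c_1+c_2}$ with a small enough exponent. We also need to make sure the cross-product normal really has $w_2\ne0$ and agrees with the given plane. The latter holds because the two points are independent: if they were proportional, the gaps and the $2$-parts would coincide. The degenerate case $n_1=0$ also needs care. If the exponent in Step 1 comes out worse, the constant 18 still leaves room: we can afford $(3/2)^c\le3^{6.6(c_1+c_2)}$.
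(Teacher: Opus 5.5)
Your proof is correct and essentially the paper's. Once you scale the normal to $x^{(1)}\times x^{(2)}$, your $w_2$, $n_1=w_0+w_2$ and $n_2$ are exactly the paper's Cramer minors $D$, $n_1$, $n_2$; note that $(\ast)$ should read $(w_1-w_2)2^s3^a$, so $n_2=w_2-w_1=3^{c_2}\mathcal N_1-3^{c_1}\mathcal N_2$. Your constants give $c<10.9(c_1+c_2)+20$, which is below $18(c_1+c_2)+18$ since $c_1,c_2\ge3$.

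The one loose spot is Step~2. There $v_3(\mathcal N)$ is uncontrolled, so $3$-adic valuations cannot bound $a$. But the case $n_1=0$ is empty: $n_1=2^{s_2}3^{a_2}\mathcal N_1-2^{s_1}3^{a_1}\mathcal N_2$ has two terms of different $2$-adic valuation. Equivalently, $(\ast)$ with $n_1=0$ at the two given points would give $v_2(w_2)=v_2(n_2)+s_1=v_2(n_2)+s_2$. This is how the paper gets $n_1\ne0$.
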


\begin{proof}
Eliminate $\beta_1,\beta_2$. This gives $\mathcal ND=3^a(3^sn_1-2^sn_2)$, where $D$, $n_1$ and
$n_2$ are minors built from the two given points. Comparing $2$-adic valuations gives $D\ne0$ and
$n_1\ne0$. Lemma~\ref{lem:threeTerms} gives $3^c<250\cdot648^{c_1+c_2}2^c$, which fails for
$c\ge18(c_1+c_2)+18$.
\end{proof}

\begin{lemma}\label{lem:planeC}
Each plane carries at most $12(1+\log\eps^{-1})$ indices $n$ of Lemma~\ref{lem:violators} with
a large gap, $\eps c_n<4s_n$.
\end{lemma}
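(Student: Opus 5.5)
We argue as in Lemma~\ref{lem:planeA}, splitting on whether the plane $w_0X_0+w_1X_1+w_2X_2=0$ has $w_2\ne0$ or $w_2=0$. In each case we confine the relevant indices to a window in the sense of Lemma~\ref{lem:window}. The indices $n$ of Lemma~\ref{lem:violators} satisfy $c_n=\tau_n+2$ with $2\tau_n<\tau_{n+1}$. The shifted sequence $c_n-2$ is doubling, so a bound $c_n\le R\,c_{n_0}$ gives $\tau_n\le R'\tau_{n_0}$ with $R'\le 3R$ (say). Lemma~\ref{lem:window} then gives at most $\log R'/\log2+1$ indices.

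\emph{Case $w_2\ne0$.} Let $n_1<n_2$ be the first two indices on the plane with a large gap.
\begin{itemize}
\item If their gaps differ, Lemma~\ref{lem:distinctGaps} gives $c_n<18(c_{n_1}+c_{n_2})+18\le 54c_{n_2}$ for every later index. This is a window of ratio $O(1)$ at $n_2$, so there are $O(1)$ indices in all.
\item If all the points on the plane have the same gap as $n_1$, Lemma~\ref{lem:sameGap} gives $7c_n<19c_{n_1}$ for all of them. This is again a window of ratio $3$ at $n_1$.
\item If some later point has a gap different from that of $n_1$, apply Lemma~\ref{lem:distinctGaps} to $n_1$ and this point, and use Lemma~\ref{lem:sameGap} for the points before it.
\end{itemize}
Either way the indices lie in at most two windows of bounded ratio, which gives $O(1)$ indices, well below $12(1+\log\eps^{-1})$.

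\emph{Case $w_2=0$.} The plane reads $w_03^c+w_12^s3^a=0$, that is, $w_03^s=-w_12^s$. If $w_1=0$ then $w_0\ne0$ and the plane contains no point, since $3^c\ne0$. Otherwise $(3/2)^s=-w_1/w_0$ is fixed, so every point on the plane has the same gap $s_0$, because $s\mapsto(3/2)^s$ is injective. Let $n_1$ be the first index on the plane. Then $s_0=c_{n_1}-a_{n_1}<c_{n_1}$. A large gap gives $\eps c_n<4s_0<4c_{n_1}$, a window of ratio $4/\eps$ at $n_1$. Lemma~\ref{lem:window} bounds the count by $\log(12/\eps)/\log2+1\le 5+2\log\eps^{-1}$, which is at most $12(1+\log\eps^{-1})$.

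\emph{Main obstacle.} The main obstacle is bookkeeping, not ideas. First, one must pass correctly between $c_n$ and the doubling sequence $\tau_n$ so that the window constants stay within the stated $12(1+\log\eps^{-1})$. Second, in the case $w_2\ne0$ one must be sure the same-gap and distinct-gap situations cover all points. I would handle both by taking $n_1$ as the first point and, if one exists, $n_2$ as the first point whose gap differs from that of $n_1$. Every point is then either before $n_2$ (same gap, so Lemma~\ref{lem:sameGap} applies) or bounded by Lemma~\ref{lem:distinctGaps} via the pair $(n_1,n_2)$. Summing the two window bounds then gives a constant, which is absorbed into $12(1+\log\eps^{-1})$.
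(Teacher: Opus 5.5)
Your proof is correct and follows the paper's argument. You split on $w_2=0$ (constant gap, window of ratio $4/\eps$ at the first index) versus $w_2\ne0$ (Lemma~\ref{lem:sameGap} bounds the indices before the first change of gap, Lemma~\ref{lem:distinctGaps} bounds those from it on), and finish with Lemma~\ref{lem:window}. One bookkeeping correction: your factor-$3$ passage from $c_n$ to $\tau_n$ still gives at most $4+8=12$ indices when $w_2\ne0$, which equals $12(1+\log\eps^{-1})$ at $\eps=1$, so this constant must actually be checked rather than called $O(1)$ and ``well below'' the bound (the paper works with $\tau_n$ directly, using $s_0\le\tau_{n_1}$, and gets ratios $8$, $128$ and $4/\eps$).
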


\begin{proof}
If $w_2=0$, then $w_03^s+w_12^s=0$, so the gap is a constant $s_0$. A large gap gives
$\tau_n<4s_0/\eps\le(4/\eps)\tau_{n_1}$, where $n_1$ is the first index on the plane. This is a
window of ratio $4/\eps$.

If $w_2\ne0$, Lemma~\ref{lem:sameGap} confines the indices with the gap of $n_1$ to a window of
ratio $8$. Let $n'$ be the first index with another gap. Lemma~\ref{lem:distinctGaps} confines
the remaining indices to a window of ratio $128$ at $n'$. Lemma~\ref{lem:window} finishes the
proof.
\end{proof}

\subsection{Small gaps}

\begin{lemma}\label{lem:linesC}
The points $y=(3^c-2^s3^a,\ 2^cM)$ of the pairs with small gap $s\le\eps c/4$ lie on at most
$K\eps^{-3}(1+\log\eps^{-1})^2$ lines, and each line carries at most three indices $n$ of
Lemma~\ref{lem:violators}.
\end{lemma}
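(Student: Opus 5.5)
The plan is to prove the two assertions separately. The covering by lines comes from one application of Theorem~\ref{thm:EF} with $n=2$, $S=\{2,3\}$ and $\alpha$ rational. The bound of three indices per line comes from a $2$-adic comparison of proportional points.

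\emph{Covering by lines.} Let $y=(y_1,y_2)=(3^c-2^s3^a,\,2^cM)$. The key identity is $y_1-y_2=x_0-x_1-x_2=\mathcal N$. We know $|\mathcal N|=2^c|\eps_c-\eps_a|\le 2^c(3^c)^{-\eps}$, and $2^c=(3^c)^{\lambda}$. So $|y_1-y_2|\le(3^c)^{\lambda-\eps}$. Since $s\ge1$, we have $3^c/3\le y_1\le3^c$. Also $|y_2|\le|y_1|+|\mathcal N|\le 2\cdot3^c$. Hence $|y|$ is comparable to $3^c$, and the constant is absorbed into $C$. I take the following data.
\begin{itemize}
\item At $\infty$: the forms $X_1-X_2$ and $X_1$, with determinant $1$ and exponents $\lambda-\eps$ and $1$.
\item At $2$: exponents $0$ on $X_1$ and $-\lambda$ on $X_2$. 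This is valid because $|y_2|_2\le2^{-c}=(3^c)^{-\lambda}$.
\item At $3$: exponents $-(1-\eps/4)$ on $X_1$ and $0$ on $X_2$. This is valid because $y_1=3^a(3^s-2^s)$, so $|y_1|_3\le3^{-a}$, and a small gap gives $a=c-s\ge(1-\eps/4)c$.
\end{itemize}
The exponents sum to $\lambda-\eps+1-\lambda-(1-\eps/4)=-3\eps/4\le-\delta$ with $\delta=\eps/2$. The side conditions on the exponents hold: one $e_i$ equals $1$, the other is $\le 1-\delta/4$, and each prime has a zero exponent. The height condition follows from $K\eps^{-1}\le c\log3$ after enlarging $K$. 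Theorem~\ref{thm:EF} then gives the bound $K\eps^{-3}(1+\log\eps^{-1})^2$ on the number of lines.

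\emph{At most three indices per line.} Suppose $y$ and $y'$, coming from $(a,c)$ and $(a',c')$ with $c<c'$, lie on the same line through $0$. Then $y'=qy$ for some $q\in\Q$. The first coordinates $y_1=3^a(3^s-2^s)$ and $y'_1$ are odd, so $v_2(q)=0$. Comparing $2$-adic valuations of the second coordinates gives $c'+v_2(M')=c+v_2(M)$, hence $v_2(M)\ge c'-c$. But $1\le M=m_c-m_a\le(3/2)^c+1$, so
\[
  c'-c\le v_2(M)\le c\log_2(3/2)+1,
\]
that is, $c'\le(1+\log_2(3/2))c+1<1.6c+1$. On the other hand, the indices of Lemma~\ref{lem:violators} have $c_n=\tau_n+2$ with $2\tau_n<\tau_{n+1}$, so $c_m\ge2c_n-3$ for $m>n$. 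These two bounds are incompatible once $c$ exceeds a small absolute constant. So any two indices on one line involve a bounded $c$.

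The main obstacle is to handle these small exceptional values cleanly enough to get the explicit constant three, rather than merely $O(1)$. I would first try the crude route: only finitely many $\tau_n$ lie below the threshold, and the doubling of $\tau_n$ leaves at most two of them before it. Combined with the one large index, this gives at most three. If this crude count fails, I would sharpen it with the parallel $3$-adic comparison, which gives $a'-a=v_3(M')-v_3(M)$ up to the unit $3^s-2^s$.
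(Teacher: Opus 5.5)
Your covering step is the paper's argument. You use the same difference form $y_1-y_2=\mathcal N$ at $\infty$ and the same exponents at $2$ and $3$. Your second form at $\infty$ is $X_1$ where the paper uses the other coordinate, and you take $\delta=\eps/2$ instead of $3\eps/4$; neither change matters. One correction: the $2$- and $3$-adic conditions in Theorem~\ref{thm:EF} carry no constant. So you need $|y|\le 3^c$ exactly, not $|y|\le 2\cdot 3^c$ with the constant ``absorbed into $C$''. The exact bound does hold. Since $2^s3^a\ge 2^c>|\mathcal N|$, you have $y_1\le 3^c-2^c$, and so $0<y_1-\mathcal N<3^c$. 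Your $2$-adic comparison of two proportional points is also exactly the paper's.

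The real gap is the final count, which you left open and which fails as written. With your inequalities $c_m\ge 2c_n-3$ and $c'<1.6c+1$, the first of two indices on a line only satisfies $c\le 9$, i.e.\ $\tau\le 7$. But $\tau=1,3,7$ is compatible with $2\tau_n<\tau_{n+1}$. So ``at most two before it'' is not justified, and you get four indices per line, not three. Your $3$-adic fallback does not help: it gives $3^{a'-a}\le M'$, again a bound of the shape $c'\lesssim c\log_2 3$.

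The obstacle disappears once you stop losing constants. One fix is to use $\tau_{n+1}\ge 2\tau_n+1$, i.e.\ $c_m\ge 2c_n-1$. Then $2c_n-1<1.6c_n+1$ forces $\tau_n\le 2$ for every index on the line except the last, and the doubling allows at most two such indices. The other fix is the paper's. Your bound holds between the \emph{first} index on the line and every later one, as $2^{c'}\le 3^c$. By $3^5<2^8$ this gives $5c'<8c$, hence $\tau'<4\tau$. So all indices on the line lie in a window of ratio $4$, and Lemma~\ref{lem:window} gives at most $\log 4/\log 2+1=3$.
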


\begin{proof}
The coordinate $y_0=3^a(3^s-2^s)$ is odd and divisible by $3^a$. Also $3^{c-1}\le|y|\le3^c$ and
$y_0-y_1=\mathcal N$. Apply Theorem~\ref{thm:EF} with $n=2$, $C=3$, $\delta=3\eps/4$, and the
exponents
\[
\begin{array}{c|cc}
 & Y_0-Y_1 & Y_1\\\hline
 \infty & \lambda-\eps & 1\\
 2 & 0 & -\lambda\\
 3 & -(1-\eps/4) & 0
\end{array}
\]
The small gap enters at $3$: $|y_0|_3\le3^{-a}\le(3^c)^{-(1-\eps/4)}$. Two points on one line
satisfy $y_0y_1'=y_0'y_1$. Since $y_0'$ is odd, $2^{c'}\mid y_1<3^c$, and $3^5<2^8$ gives
$5c'<8c$, hence $\tau'<4\tau$.
\end{proof}

\subsection{The count}
Let $\eps_0=(\log\tau_{2N})^{-v}$ and $\eps=\eps_0/6$. For $N\le n<2N$, the pair $(a_n,c_n)$
satisfies $|\eps_{c_n}-\eps_{a_n}|\le(3^{c_n})^{-\eps}$, and for large $N$ the threshold of
Proposition~\ref{prop:planesC} holds. Splitting the $N$ indices by the size of the gap,
Proposition~\ref{prop:planesC} and Lemmas~\ref{lem:planeC} and~\ref{lem:linesC} give
\[
  N\le K\eps^{-4}(1+\log\eps^{-1})^2\cdot12(1+\log\eps^{-1})+3K\eps^{-3}(1+\log\eps^{-1})^2
  =N^{4v+o(1)}.
\]
This contradicts $4v<1$ and proves Theorem~\ref{thm:C}.\qed

\section{Remarks on the exponents}

\begin{remark}\label{rem:seventh}
If the original argument of \cite[\S\S6--7]{BE08} is run with Theorem~\ref{thm:EF} in place of
\cite{ES02}, both the three-variable step and the per-plane step improve. This gives
\eqref{eq:BE} for every $\eta<1/7$, since $7=1+3+3$. It is the most direct reading of Bugeaud's
``slightly larger interval than $[0,1/11)$'' \cite{Bug15}. This intermediate statement is also
machine-verified (Appendix~\ref{app:lean}).
\end{remark}

\begin{remark}
In each proof, Ridout's theorem is still needed on the planes of constant gap. There we lose only
a logarithm, because the large-gap condition $\eps t<4s$ confines the points to a window of
ratio $4/\eps$. Points with a small gap behave like approximations to a single number, and they
are counted once on lines. Within that count, the coprimality of $b$ and $b^s-1$ (or the oddness
of $3^s-2^s$) separates the points. In Theorem~\ref{thm:B} the exceptional hyperplane $T_0$
plays the role of the small gaps.
\end{remark}

\begin{remark}
The exponent $1/4$ is $1/(1+3)$: one power of $\eps^{-1}$ for the classes, three for the count of
Theorem~\ref{thm:EF}. Improving it by this method would need either fewer classes or a
count better than $\delta^{-3}$ in the quantitative Subspace Theorem.
\end{remark}

\section*{Acknowledgements}
We thank Yann Bugeaud for drawing our attention to this question. This work relies on a complete
formalization of the Subspace Theorem and its quantitative versions, which is publicly available
and is being incorporated into the TauCeti library.

\appendix
\section{The Lean formalization}\label{app:lean}

Everything is in the repository
\begin{center}\url{https://github.com/rwst/Subspace-Theorems},\end{center}
a Lean~4 project on Mathlib. Every declaration below is proved without \texttt{sorry}. Its axioms, as
reported by \texttt{\#print axioms}, are the three standard axioms \texttt{propext},
\texttt{Classical.choice} and \texttt{Quot.sound}. The repository declares no axioms of its own.
The steering-word theorems are additionally certified by the \texttt{comparator} tool against an
independent statement file. The directories are:
\begin{itemize}
\item \texttt{BugeaudEvertse2008/} for \cite{BE08}, and its subdirectory \texttt{Additions/} for
  Section~\ref{sec:A};
\item \texttt{Bugeaud2013/Additions/} for Section~\ref{sec:B};
\item \texttt{Stephan2026a/} for Section~\ref{sec:C};
\item \texttt{QuantitativeSubspace/} and \texttt{DiophantineApproximation/} for the shared tools.
\end{itemize}

In the Lean statements:
\begin{itemize}
\item Theorem~\ref{thm:A} is stated for $\xi=\sum_{j\ge0}a_jb^{-j-1}$ with digits
  $a_j\in\{0,\dots,b-1\}$, irrational and algebraic. This is the case $0<\xi<1$, which suffices.
\item Theorem~\ref{thm:B} counts the factors of $a_1a_2\cdots$ as a cardinal, possibly infinite.
\item Theorem~\ref{thm:C} is stated for the word $t_n=2m_{n+1}-3m_n$ with $m_n$ the rounded value
  of $(3/2)^n$.
\end{itemize}

\medskip
\begin{center}
\footnotesize
\begin{tabular}{lll}
\hline
Statement & Lean declaration & File\\\hline
\multicolumn{3}{l}{\emph{Section~\ref{sec:A}: \lean{BugeaudEvertse2008/}}}\\
Thm.~\ref{thm:A} & \lean{Real.frequently\_lt\_complexity\_of\_lt\_quarter} & \lean{Additions/Theorem21Quarter}\\
Lemma~\ref{lem:61} & \lean{Real.exists\_approx\_of\_complexity\_le} & \lean{Repetitions}\\
Prop.~\ref{prop:planesA} & \lean{Real.exists\_planes\_of\_approx} & \lean{ThreeVariable}\\
Lemma~\ref{lem:liou} & \lean{Real.exists\_mul\_log\_le\_of\_plane} & \lean{Additions/PlaneLiouville}\\
Lemma~\ref{lem:planeA} & \lean{Real.card\_mem\_plane\_le\_log} & \lean{Additions/PlaneLiouville}\\
Lemma~\ref{lem:linesA} & \lean{Real.exists\_lines\_of\_small\_gap} & \lean{Additions/SmallGap}\\
Lemma~\ref{lem:dvd} & \lean{Int.dvd\_of\_pow\_sub\_pow\_mul\_eq} & \lean{Additions/SmallGap}\\
Remark~\ref{rem:seventh} & \lean{Real.frequently\_lt\_complexity} & \lean{Theorem21}\\
\multicolumn{3}{l}{\emph{Section~\ref{sec:B}: \lean{Bugeaud2013/Additions/}}}\\
Thm.~\ref{thm:B} & \lean{Nat.frequently\_lt\_encard\_factors\_of\_lt\_quarter} & \lean{Delta}\\
Lemma~\ref{lem:cfpoint} & \lean{Nat.cfPoint\_spadeVec} & \lean{CFPoints}\\
Lemma~\ref{lem:config} & \lean{Nat.exists\_config} & \lean{Complexity}\\
Prop.~\ref{prop:countB} & \lean{Real.card\_le\_of\_cfPoints} & \lean{Count}\\
\quad step (i) & \lean{Real.exists\_cover\_four} & \lean{Count}\\
\quad step (ii) & \lean{Real.card\_mem\_le\_of\_finrank\_le\_two} & \lean{Points}\\
\quad step (iii) & \lean{Real.card\_mem\_le\_of\_ne\_top} & \lean{Subspaces}\\
\quad step (iv) & \lean{Real.exists\_planes\_t0} & \lean{Hyperplanes}\\
Remark~\ref{rem:ninth} & \lean{Nat.frequently\_lt\_encard\_factors\_of\_lt\_ninth} & \lean{Delta}\\
\multicolumn{3}{l}{\emph{Section~\ref{sec:C}: \lean{Stephan2026a/}}}\\
Thm.~\ref{thm:C} & \lean{SteeringWord.frequently\_lt\_complexity} & \lean{QuantitativeTheorem}\\
\quad ($1/8$) & \lean{SteeringWord.quantitativeSuperlinear} & \lean{QuantitativeTheorem}\\
Lemma~\ref{lem:violators} & \lean{SteeringWord.exists\_violators\_of\_complexity\_le} & \lean{QuantitativeViolators}\\
Prop.~\ref{prop:planesC} & \lean{SteeringWord.exists\_planes\_of\_violators} & \lean{QuantitativePlanes}\\
Lemma~\ref{lem:sameGap} & \lean{SteeringWord.seven\_mul\_lt\_of\_same\_gap} & \lean{QuantitativePlaneStep}\\
Lemma~\ref{lem:threeTerms} & \lean{SteeringWord.three\_pow\_le\_of\_ne\_zero} & \lean{QuantitativePlaneStep}\\
Lemma~\ref{lem:distinctGaps} & \lean{SteeringWord.lt\_of\_ne\_gap} & \lean{QuantitativePlaneStep}\\
Lemma~\ref{lem:planeC} & \lean{SteeringWord.card\_mem\_plane\_le} & \lean{QuantitativePlaneStep}\\
Lemma~\ref{lem:linesC} & \lean{SteeringWord.exists\_lines\_of\_violators} & \lean{QuantitativeLines}\\
 & \lean{SteeringWord.card\_mem\_line\_le} & \lean{QuantitativeLines}\\
\multicolumn{3}{l}{\emph{Shared tools: \lean{QuantitativeSubspace/}, \lean{DiophantineApproximation/}}}\\
Thm.~\ref{thm:EF} & \lean{Real.subspaceBoundAt\_three} & \lean{RealFormCount}\\
Lemma~\ref{lem:window} & \lean{Nat.card\_le\_of\_doubling\_window} & \lean{DoublingSequences}\\
\hline
\end{tabular}
\end{center}

% omitted: Real.exists_plane_eq (cross-product reduction, folded into Lemma 3.5),
% Real.exists_inv_le_abs_add_mul (Liouville (3.1)), Real.card_mem_plane_le (BE08 per-plane
% Ridout count, used only for 1/7), Real.eventually_doubling_thresholds and
% Real.eventually_thresholds (asymptotics of eps), Bugeaud2013 Asymptotics.lean (lenSeq lemmas),
% Liouville2.lean (quadratic Liouville), Forms.lean (projCoef/t0Coef/crossZ determinants),
% Real.exists_planeCover, Real.lamCoef_ne_zero, Real.rpow_le_of_mem_span_pair (folded into the
% sketch of Prop. 4.3), Real.SubspaceBound (S = empty case of Thm 2.1), Nat.frequently_lt_encard_factors
% (general exponent a, stated in Remark 4.4), SteeringWord kernel-point arithmetic
% (kernelNum_eq, odd_kernelNum, abs_kernelNum_lt, iSup_abs_kernelPoint, padicNorm lemmas,
% odd_three_pow_sub, five_mul_lt_of_mul_eq, two_pow_mul_ne*), abs_eps_sub_le_rpow (stated in
% prose after Lemma 5.1), complexity_shift_le, factors_shift_subset, private helpers.

\end{document}